\def\1{{\bf 1}}
\definecolor{cadmiumgreen}{rgb}{0.0, 0.42, 0.24}
\def \V{\mathbb{V}}
\newcommand{\EE}{\mathbb{E}}
\newcommand{\NN}{\mathbb{N}}
\newcommand{\RR}{\mathbb{R}}
\newcommand{\D}{\mathrm{d}}
\newcommand{\Ff}{\mathcal{F}}
\newcommand{\Xx}{\mathcal{X}}
\definecolor{ocean}{rgb}{0,0.1,0.6}
\definecolor{imperialGreen}{RGB}{2,137,59}
\definecolor{imperialBlue}{RGB}{0, 62, 116}
\definecolor{imperialBrick}{RGB}{165,25,0}
\definecolor{imperialProcess}{RGB}{0,133,202}
\def \V{\mathbb{V}}
\newtheorem{theorem}{Theorem}[section]
\theoremstyle{definition}
\newtheorem{definition}[theorem]{Definition}
\newtheorem{remark}[theorem]{Remark}
\newtheorem{assumption}[theorem]{Assumption}
\theoremstyle{plain}
\numberwithin{equation}{section}
\newcommand{\hs}{\vspace{3mm}}
\begin{document}

\title{Continuous-time persuasion by filtering}
\author{René Aïd\thanks{Paris-Dauphine University, PSL Research University, rene.aid@dauphine.psl.eu.}\quad
Ofelia Bonesini\thanks{London School of Economics and Political Sciences, Department of Mathematics, o.bonesini@lse.ac.uk.}\quad
Giorgia Callegaro\thanks{Universit\`a degli Studi di  Padova, Dipartimento di Matematica ``Tullio Levi-Civita'', gcallega@math.unipd.it.}\quad
Luciano Campi\thanks{Universit\`a degli Studi di Milano, Dipartimento di Matematica ``Federigo Enriques'', luciano.campi@unimi.it.}
}
\maketitle

\begin{abstract}
We frame dynamic persuasion in a partial observation stochastic control Leader-Follower game with an ergodic criterion. The Receiver controls the dynamics of a multidimensional unobserved state process. Information is provided to the Receiver through a  device designed by the Sender that generates the observation process. The commitment of the Sender is enforced. We develop this approach in the case where all dynamics are linear and the preferences of the Receiver are linear-quadratic. We prove a verification theorem for the existence and uniqueness of the solution of the HJB equation satisfied by the Receiver's value function. An extension to the case of persuasion of a mean field of interacting Receivers is also provided. We illustrate this approach in two applications: the provision of information to electricity consumers with a smart meter designed by an electricity producer; the information provided by carbon footprint accounting rules to companies engaged in a best-in-class emissions reduction effort. In the first application, we link the benefits of information provision to the mispricing of electricity production. In the latter, we show that even in the absence of information cost, it might be optimal for the regulator to blur information available to firms to prevent them from coordinating on a higher level of carbon footprint to reduce their cost of reaching a below average emission target.
\end{abstract}

\textbf{Keywords:} persuasion, filtering, ergodic control, Stackelberg games, mean field games, smart meters, carbon footprint.

{\bf JEL classification}: C61, C73, D82, D83, Q51.

{\bf MSC 2020 classification}: 93E11, 91A15, 91A16, 91B76.

\section{Introduction}
In conjunction with technologies and consumer preferences, information represents a powerful instrument for influencing the actions of individuals. The Cambridge Analytica scandal represents one of the most recent and potentially alarming instances of the use of information to influence a collective decision. To be more precise, Cambridge Analytica harvested millions of Facebook users' information with the objective of designing a targeted information policy to undecided voters in the 2016 US Presidential election (for a detailed account of the precise nature of the information collected from Facebook users and their information targeting strategy, see \cite{Rehman19}). If this case captured the attention of public authorities due to its magnitude, the sale campaigns on social networks that employ influencers as mediators also expose all of us to information provided by agents whose intentions are not necessarily aligned with the best interests of consumers\footnote{This is particularly relevant in light of the recent French law (2023-451) of 9 June 2023 on the regulation of the activities of social media influencers.}. Lobbyists in the pharmaceutical industry or tobacco companies provide further examples of situations where information is the natural tool used to influence the decision of a public policy maker. Thus, on one hand, it would be remiss to ignore the examples of situations where information is provided to the benefit of the influencer. On the other hand, it is important to recognise that there are alternative situations where information provision is required for the benefit of the consumer. For example, the dissemination of accurate information about the state of the pandemic was a significant factor in obtaining public acceptance of the necessary restrictions to save lives. For further insights on this last framework see  \cite{Pathak22}.

The situations above fit in the framework of signaling games and, since the seminal paper of \cite{Crawford82} on the strategic provision of information between an informed Sender and an uninformed Receiver, considerable attention has been given in the economic literature on a particular form of them, namely {\em persuasion games}. In signaling games, a Sender (she) tries to influence for her best interest the action of a Receiver (he) by sending him a message on the state of the world. Two cases can occur: either the Sender does observe the true state of Nature and she sends a message accordingly or the Sender designs a mechanism that generates messages as a function of the state that she may not even observe. The activity of influencers can be classified in the first class: they observe the true qualities of a product but they can decide to highlight certain aspects to drive the consumer into buying the product. They enter into a class of signaling games that can be referred to as {\em cheap talk} (see \cite{forges2020annals}). On the other hand, going back to our previous example, a pharmaceutical company falls into the second scenario. To obtain a license from the drug administration, the firm must design a testing protocol that produces results based on the natural variability of the sample cases. However, the company is required to design this test before the actual outcome of Nature's draw is known. The two cases differ in the fact that, in the first one, the Receiver only observe the message whereas, in the second situation, the Receiver knows the {\em message rule}. In other words, in the first case ({\em cheap talk}), the equilibrium concept is of a Nash-equilibrium in mutual best response between the Sender and the Receiver, so that, at equilibrium, the Receiver knows the conditional probability $\pi(m|\omega)$ of receiving a certain message value $m$ when the state of Nature is $\omega$.

The framework of Bayesian persuasion developed by \cite{Kamenica11} made the choice of another solution concept, namely a Stackelberg equilibrium between the Sender and the Receiver where the Sender {\em commits} to a given conditional probability $\pi(m|\omega)$ observed by the Receiver. In a sense, Bayesian persuasion can be seen as the information design counterpart of the monetary incentives in optimal contract theory of Principal-Agent's game. Bayesian persuasion has gathered considerable attention, as evidenced by the numerous citations and its application across various fields of social sciences (see \cite{Kamenica19} for an earlier survey of application papers). However, many real-life economic situations involve the dynamic provision of information: increasing sales requires constant provision of advertisements, winning investors fundings requires a flow of positive signal on the investment. As a result, the need to extend the Bayesian persuasion framework to a dynamic setting quickly became a priority on the economic research agenda of persuasion.

\medskip

In this paper, we model dynamic persuasion as a stochastic control game with partial observation and we address it using filtering theory. In our framework, the Receiver controls the dynamics of the unobserved (and possibly multidimensional) state. Information is provided to the Receiver thanks to a {\em device} that generates an observation process  which serves as the information process (also potentially multidimensional). The Sender designs the characteristics of the device, i.e., she chooses the values of its parameters. Once the device is designed and in operation, the Sender has no longer control over the trajectories of the information process. The Receiver then solves a stochastic control problem with partial information. We assume that the Sender is concerned with the stationary operation of its information provision mechanism. Therefore, we embed the Receiver with an ergodic criterion, which captures the stationary long-run behaviour of the Receiver. Furthermore, we develop this approach in the case where all dynamics are linear and the preferences of the Receiver are linear-quadratic. In this setting, the separation principle applies and the stochastic control problem of the Receiver can be solved using only the dynamics of the estimated state provided by the Kalman-Bucy filter (see \cite{Bensoussan18}, Section 9.3). Moreover, although the preferences of the Receiver are restricted to be linear-quadratic, such a restriction does not apply to the Sender, for which only the stationary law of the state variables and their conditional expectation matter. In this context, we prove a verification theorem for the existence and uniqueness of the solution of the HJB equation satisfied by the Receiver's value function. An extension to the case of persuasion of a mean field of interacting Receivers is also provided.
\medskip

This approach contributes to the research agenda proposed by \cite{Kamenica21}, where the authors call for the development of models where the structure of the information provision mechanism is constrained to better reflect socio-economic situations. Our framework introduces several advancements that extend the applicability of persuasion models to economics.

In the first place, it explicitly links dynamic persuasion, and in particular continuous-time persuasion, to filtering theory and stochastic control with partial observation. 
Surprisingly, with the exception of \cite{Escude23}, so far none of the papers dealing with dynamic persuasion, and in particular continuous-time persuasion, mentioned that the dynamics of the estimated state is given by the filtering equation. In its most general formulation, the dynamics of the estimated state in continuous time leads to the Zakai equation (see \cite{Bain09}, Section 3.5), which is a stochastic differential equation in infinite dimension. When coupled to a stochastic control problem, there are two settings that allow the infinite dimensionality of the Zakai equation to be reduced to a finite dimensional problem: the  state taking a finite number of values and the linear-quadratic case of the Kalman-Bucy filter. The literature on Bayesian persuasion has focused its analysis on the first case (with the state taking two values), which has now become so prominent to be called {\em the canonical model}, after the proposition made by \cite{Che23}. This paper develops in some detail the second case, which has not yet been treated in the literature. 

In the second place, an immediate dividend of the linear-quadratic Kalman-Bucy optimal control setting is that it allows to consider controlled states, a lacking feature in the current literature. While in the static case the distinction between action and state is not of much use, it becomes crucial in dynamic contexts opening up the possibility of modelling more complex situations. This framework makes it possible to draw on existing techniques and results from stochastic control and stochastic game theory to develop applications. Indeed, the case of a mean-field interaction of Receivers has been addressed without difficulty. 

Finally, in our framework, the Sender's strategy is defined as a device that generates the information process. 
This definition has the advantage of enforcing the Sender's commitment. As the Sender's decision is {\em static}, she is not allowed to deviate once the information mechanism starts providing messages to the Receiver.  Nevertheless, the information provision remains dynamic in the sense that the Receiver can only gradually get closer to the true value of the state, with the Sender controlling the speed at which the Receiver can identify the true value of the state. This contrasts with the literature on dynamic Bayesian persuasion, both in discrete and continuous time. The current preferred modelling is to allow the Sender to choose an information rate at each instant (see \cite{Renault13}, \cite{Golosov14}, \cite{Au15}, \cite{Renault17}, \cite{Ely17}, \cite{Ely19}, \cite{Orlov20}, \cite{Bizzotto21}, \cite{Liao21}, \cite{Ball23}, \cite{Che23}, \cite{Yao23}, \cite{Zhao24}). However, this conventional approach generally assumes that  the Receiver behaves as a short-sighted agent rather than as an agent with rational anticipations. By making the opposite choice, we ensure the Sender's commitment while allowing the Receiver to act rationally. Extending this framework to allow the Sender to dynamically alter the information rate parameter would result in a Leader-Follower stochastic differential game with partial information, which is still an open problem even in the case of a linear-quadratic setting with Gaussian noise. To the best of our knowledge, only the case of linear-quadratic games with full information has been solved via closed-form solutions (\cite{Bensoussan13}).

\medskip

Furthermore,  to illustrate the interests of this framework and prove the effectiveness of this novel approach in modelling economic situations, we provide two real-life applications. In these applications, the interests of the Sender and the Receiver may be aligned (depending of the parameters model value). Thus, in the absence of information device cost, the Sender would provide the true state of Nature. This is not necessarily the trend in the economic literature which mainly considers non-aligned interests between the Sender and the Receiver. Nevertheless, this focus on non-aligned interests is present in all signaling games: cheap talk and Bayesian persuasion share the same research question wether the Sender can use information to increase her value compared to providing no information. However, Bayesian persuasion differentiates itself with the hypothesis of Sender's commitment to a given conditional probability of a message. In this sense, the applications we present fit this idea of persuasion.

\medskip

The first application assesses the social value of smart meters for electricity consumption. The need to reduce to zero the carbon emissions of electric systems, responsible for about a third of total global greenhouse gas emissions, has led to the massive development of intermittent renewable energy sources, but also to actions on the electricity consumption side to make it more flexible and responsive to incentives (see \cite{Aid22} for an introduction to the subject and an incentive mechanism based on contract theory). It is recognised that smart meters bring social benefits by reducing the costs of measuring actual electricity consumption and detecting non-technical losses (i.e., fraudulent consumption). However, despite the existence of a literature reporting experiments showing positive impact of information on consumption reduction (see \cite{Karlin15}), doubts have been raised as to whether consumption reductions or shifts can be efficiently achieved using only information signals sent to consumers (see \cite{Pepermans14, Dato20}). Therefore, we develop a dynamic Bayesian persuasion model to assess the effect of providing consumers with information about their real-time electricity consumption. 

In our continuous-time persuasion model, the producer (Sender) incurs strictly convex costs to satisfy the total electricity demand of consumers and collects the electricity bill from consumers who pay the spot price of electricity, which is determined as an affine function of the total demand. The Sender designs a smart meter that allows consumers (Receivers) to obtain an estimate of their real-time consumption. The more efficient the smart meter, the more accurately the consumer knows his true consumption in real time. Here, the efficiency of the smart meter can be seen as the measurement frequency of the true consumption (e.g. from an annual basis to a minute basis). Of course, the efficiency of the smart meter comes at a cost, which we assume to be a strictly convex function of the device precision. The consumer has a preference for a certain level of consumption and can take costly measures to keep his consumption close to this preference. We consider two scenarios: either a single Receiver acting as a representative consumer, or a mean field of consumers interacting through the instantaneous spot price of electricity.

We provide the optimal action of the Receiver in closed loop form depending on the estimated process of its true consumption. We find that the average consumption is not affected by the accuracy of the device, regardless of its characteristics. Only the variance of consumption is reduced. Thus, our model captures the desired effect that consumption is only moved around by signals and not reduced (washing the cloths is just postponed but not canceled). Hence, the social benefit of the information provided by smart meters only comes from the reduction in the variance of consumption.  In particular, we provide the threshold above which the volatility of the consumption reduction is too low to induce social benefits. This threshold depends on only two factors: the mispricing of electricity generation costs and the potential variance reduction. We show that, when electricity generation costs are perfectly priced, there is no social benefit from developing smart meters to induce behavioural changes in consumption. This result refines the one given within a static setting in \cite{Aid23}. It also provides a theoretical argument on the provision of information to the energy industry supporting the development of electricity spot prices sent directly to consumers (see for example \cite{Wolak11}). 

\medskip

The second application focuses on the reduction of the firms carbon footprint. The signatories of the Paris Agreement have committed themselves to achieving carbon neutrality by 2050. An important tool for reaching this goal is the disclosure of the direct and indirect emissions of each company.  In Europe, this strategy is implemented in the \cite{Directive22}: in short CSRD for Corporate Sustainability Reporting Directive.  The idea is that making this information public leads to emulation fostering the competition between companies to reduce their carbon footprint. In order to explore how carbon footprint information disclosure shapes the collective behaviour of firms in an emulation context, we developed a simple model of best-in-class competition.  In this model, firms base their abatement efforts on the stringency of carbon footprint disclosure regulations, using this information to determine their own level of commitment to reducing emissions.

In our model, the Sender is the regulator who designs the guidelines and constraints for carbon accounting. This set of rules forces companies to report their carbon footprint, i.e.,  their direct and indirect emissions. The stricter the regulation, the more precise the information that companies have to collect, and so the better they can understand their own carbon footprint. However, stricter regulations also lead to higher social costs, as more resources are needed to collect and process the necessary information. 
In France, for example, the Cour des Comptes reports the estimated cost to comply with the EU Directive on CSRD to be between 40 k\euro~and 320 k\euro~per company  and a further cost ranging between 67 k\euro~and 540 k\euro~for auditing (see \cite{Cour24}, Section 2.I.A.3). 

Thus, we assume that the Sender (the regulator) internalises the social costs induced by a stricter policy with a strictly convex cost function. Besides, we consider a continuum of firms in a mean-field approximation.  In this model, each firm's preferences are a trade-off between two objectives:  achieving a personal carbon footprint target and positioning themselves below the average carbon footprint of all firms by some factor $\epsilon$ of the standard deviation. This latter objective captures the ``best-in-class'' feature of the emulation involved in designing the regulation of carbon footprint disclosure policies. The larger the parameter $\epsilon$, the more  emulation is at play, as companies strive to be further below the industry average. The explicit closed-form equilibrium and the optimal information design policy are provided for this example as well. 

Our model of information provision shows that when there is no strategic interaction between firms, only information cost can justify deviation from sending full information. But, in the presence of strategic interaction in the form of a best-in-class emulation process, information can also be used by strategic firms to coordinate on a collectively higher level of carbon footprint to look individually cleaner. And, thus, even in the absence of information cost, it may be optimal for the regulator to blur information available to firms.

\paragraph{Literature review.} 
Our contribution lies at the intersection of two fields within mathematical economics. The first is the literature on dynamic Bayesian persuasion and the second is the application of filtering theory to economic sciences. 

Regarding dynamic persuasion, most of the literature focuses on the discrete-time setting like \cite{Renault13, Golosov14, Au15, Renault17, Bizzotto21, Bizzotto21b, Zhao24}.  All of these models, with the exception of \cite{Golosov14}, are in infinite horizon.  Furthermore, aside from \cite{Renault13}, who considers a state evolving according to a Markov chain, the canonical problem involves a binary state variable known to the Sender.  In these models, the Receiver has to make a decision at some point in time and when the decision is made the game stops. The Sender can choose an information rate at each time period and the key question is whether the Receiver can learn the true value of the state.  The Receiver is often assumed to be short-sighted in order to simplify the analysis, focusing on the Sender's dynamic information supply and avoiding the complexities that arise from considering the Receiver's rational anticipations.  Moreover, \cite{Bizzotto21} and \cite{Bizzotto21b} consider the case where the Receiver has an additional source of information.

Dynamic Bayesian persuasion in a continuous-time setting was first introduced in the seminal paper by \cite{Ely17}. This initial work was followed by a series of papers, including \cite{Orlov20, Ely19, Bizzotto21, Liao21, Ball23, Yao23, Escude23}. 

The authors of these studies assume a myopic Receiver for the same reason as in discrete-time models. 
In these models, the Sender controls either the intensity of a Poisson process modelling the rate of information provision (\cite{Ely17}) or the volatility of the estimated state.  For instance, in \cite{Liao21} the volatility can take two values, while in \cite{Escude23}  it is a continuous process taking values in a bounded interval. 
The primary focus of this literature is  on whether it is in the Sender's interest to reveal the true value of the state and whether the Receiver is able to discern it. Consequently, no cost function is attributed to the provision of information. 
The only exception we find is in \cite{Che23}, which introduces a cost for the information rate and the idea that providing information can be a costly business.  
In fact, the literature on Bayesian persuasion is mostly interested in the pure effect of the information provision on the behaviour of recipients and for this reason, most papers do not consider the presence of cost of information. 
However, as early as the example of the lobbyist in \cite[Section V]{Kamenica11}, the authors notice that the optimal solution of the lobbyist is to provide either full information or no information, which they find to be inconsistent with the observation that the lobbyist provides partial information in the form of costly studies.  
As a consequence, the authors themselves suggest that adding a cost to the provision of information would reconcile this discrepancy between theory and observation: a suggestion that our own paper takes up.

Regarding filtering theory and its relationship with economics and finance, it is widely employed in econometrics to estimate time-varying coefficients of models such as ARMA, regime-switching models, stochastic volatility models. 
Key textbook references that have advanced the use  of the Kalman filter for economic applications include \cite{Harvey89}, \cite{Hamilton94} and \cite{Durbin01}. 
Among the numerous papers in finance using filtering methods, we can mention the influential \cite{Schwartz97}, where these techniques are applied to model the dynamics of commodity prices; \cite{Engle83}, that focuses on the application to wage rates in California; and \cite{Harvey93} which applies filtering theory to macroeconomic data on business cycles.

\paragraph{Paper's roadmap.} 
The paper begins by introducing our motivation: the two applications of our framework for continuous-time Bayesian persuasion (Section~\ref{sec:apps}).  In Section~\ref{sec:theory}, we present the mathematical formalisation of the framework, that is a linear-quadratic (stochastic) Stackelberg game with an ergodic criterion in partial information between a (MFG of) Receiver(s) and a Sender. In this Section we also present and prove the Verification Theorem used to solve the Receiver's separated problem. In Section~\ref{sec:apps-sol}, we detail the numerical results for the two applications, and finally, Section~\ref{sec:conclusion} concludes the paper, providing research perspectives on the application of filtering theory in shaping agents' behaviour.

\paragraph{Notation.}
In what follows, vectors are column vectors.
For any $n,m \in \NN$, we denote the standard Euclidean norm on $\RR^n$ by $\
|\cdot|$. For matrices $A = (a_{i,j})_{i,j}\in \mathbb{R}^{n\times m}$, we use the following norm (and the induced distance)
$\| A\| := \sum_{i=1}^n \sum_{j=1}^m |a_{i,j}|$.
We use the notation $\mathbb R_+^{n \times m}$ for the set of all matrices $A$ such that $A^\top A$ is positive semi-definite. Moreover, $\mathcal S_n (\mathbb R)$ indicates the set of all symmetric real-valued $n$-dimensional squared matrices.
Finally, we denote by $\mathcal H_n ^2(\mathbb F)$ the set of stochastic processes $\nu$ with values in $\mathbb R^n$, progressively measurable with respect to $\mathbb F := {(\mathcal F_t)}_{t \ge 0}$ and such that $\mathbb E [ \int_0^T {|\nu_t|}^2 \D t ] < +\infty$ for all $T > 0$.

\section{Applications} \label{sec:apps}

\subsection{The informative value of smart meters}\label{sec:ex_elec}

The necessity to reduce the carbon emissions from electricity production worldwide has driven the large-scale development of renewable energy sources, particularly wind and solar power. However, the intermittent nature of these energy sources -- characterised by significant fluctuations in production throughout the day -- necessitates the use of storage systems. 
These systems are essential to manage sudden increases or decreases in supply and ensure a stable and reliable energy grid.  Making electricity consumption more flexible, i.e., encouraging consumers to adjust their consumption patterns to the  production, is one available way to adjust production to consumption. For this reason, the European Union has developed an intensive program of smart metering deployment for private households.  A smart meter is an advanced type of utility meter that records the consumption of electricity, gas, or water in a more detailed and precise manner than traditional meters. These meters are equipped with digital technology and communication capabilities that enable a two-way communication between the meter and the utility provider.  Across all state members, a total amount of approximately 38 billions euros has been invested for the installation of 223 millions smart meters (see \cite{Tounquet20}, Executive Summary p.~19).  The objectives of this massive deployment of smart meters are multiple: better estimation of the imbalances costs between retail supply and consumption, improvement of transmission and distribution network reliability, enhancement of dynamic tarification and provision of information signals to consumers when consumption is highly costly. 

Nevertheless, the vast collection of communication functionalities of smart meters has raised concerns about both their acceptability among consumers (see \cite{Pepermans14}) and their efficiency within specific electricity tariff structures (see \cite{Dato20, Nouicer23}). Indeed, smart meters offer social utility mainly when pricing schemes are based on time-of-use, allowing consumers to adjust their consumption in response to varying rates throughout the day.
Nevertheless, existing dynamic tarification has already succeeded in reducing electricity consumption without relying on sophisticated digital technologies (see \cite{Faruqui10}).  Indeed, an important feature of the smart meter is the frequency at which electricity consumption is measured. From daily measurements to high-frequency monitoring (meaning several kHz), the data collected on the household consumption varies significantly across this spectrum: in the first case, one can only assess daily electricity consumption trends, while at the other end of the frequency spectrum, it is possible to identify specific household appliances in use and offer tailored energy efficiency advice (see \cite{Schirmer23} for a review of non-intrusive load management applications of smart meters). Nevertheless, measuring the electricity consumption at a higher frequency increases the cost of the smart meters information system. This raises the question: what is the socially optimal trade-off?

\medskip

In what follows, we focus on the social economic value of smart meters in their role as information provision tools to consumers. We frame the problem in a persuasion framework in continuous-time. At initial time, a regulated electricity producer acting as a monopolist on the market considers investing in a {\em device}, the smart meter, designed to provide information to the consumers on their real-time consumption so that they can adjust their behaviour in response to fluctuating electricity prices. The device has several features that can be resumed by its ability to provide an accurate description of the continuous consumption of the consumer. This accuracy can be thought as the frequency at which the device provides information on the consumption of the consumers. Indeed, prior to the development of smart meters, the electricity consumption of households were measured once a year, and bills that were sent very month or every two months, were based on past estimation of the consumption. Now, with smart meters, it is possible to measure the electricity consumption at a daily basis, or an hourly basis and even at a 10-minutes basis. The consumers could reveice a bill every hour.  Would the cost of such devices be negligible, the attention of the consumer infinite, and the electricity consistent with marginal costs of production, the solution is simply to install the most precise device and to provide full information to the consumer (see \cite{Aid23}). However, it is quite clear that measuring and delivering to the consumers its consumption at a one-minute basis should be (much) more costly than a simple annual measure. Thus,  when the device is costly and consumers face frictions in changing their consumption habits, the producer has to make a trade-off choice between the rate at which information is provided to consumers and the induced investment cost. 

\medskip

We assume that consumers have the capacity to process any level of information rate but present frictions in changing their habit of consumption. 
We analyse and discuss two cases: the case where consumers are modeled as a representative agent and the case of a crowd of interacting consumers, leading to a persuasion problem with mean-field interaction of the Receivers.

\medskip

The consumption rate (in kW) of the representative consumer is assumed to follow the dynamics
\begin{align}
	\D X_t = -\kappa(X_t - \ell) \D t  + v_t \D t + \D W_t, \quad X_0=x_0>0,
\end{align}
where $\ell>0$ is the stationary level of consumption rate (his consumption habit), $\kappa > 0$ is the speed of mean-reversion to the just-mentioned stationary level, $v$ is the control of the consumer on its consumption rate and $W$ is a standard Brownian motion. The consumer has net utility (utility minus friction costs of action) resulting from his energy consumption, given by 
\begin{equation}\label{eq:utility_elec}
u(x,v) := -u_0 (x- \ell)^2 - c(v),
\end{equation}
where $u_0 >0$ and $c(v) := \frac12 v^2/\gamma$ is the cost of control and $\gamma>0$. The utility function of the consumer translates its preference for its average level of consumption $\ell$. Besides, the larger the parameter $u_0$, the larger the consumer gives importance to the induced comfort from this level of consumption. The real-time price of electricity is assumed to be an affine function of the consumption rate given by $p(x) := p_0 + p_1 x$, with $p_0, p_1>0$.  This price can be seen as the spot price of electricity. The consumer wishes to maximise the utility of electricity consumption minus friction costs and electricity bills. Namely, he solves the following stochastic control problem:
\begin{align}
	\sup_{v \in \mathcal A^{\rm R}} J^{\rm {R}}(v; b ,\sigma) := \sup_{v \in \mathcal A^{\rm R}}   \liminf_{T\to +\infty} \EE\Bigg[ \frac1T \int_0^T \Big( u(X_t,v_t) - p(X_t) X_t \Big) \D t \Bigg],
\end{align}
where the admissible set of controls $\mathcal A^{\rm R}$ is specified in Section~\ref{sec:theory}. Note that the model assumes that the instantaneous price $p(X_t)$ to be non-observable, i.e. the consumer does not observe the spot price of electricity. This translates the idea described above that the consumption and its cost are known to the consumer at a frequency which depends on the metering device. Indeed, at time $t\geq 0$, the consumer has access to a measure $M_t$ of his consumption $X_t$ thanks to the information device installed by the producer. The measure of the consumption follows the dynamics 
\begin{align}
	\D M_t = bX_t \D t  + \sigma \D B_t, \quad M_0 =0,
\end{align}
where $b$ and $\sigma$ are non-negative parameters characterising the smart meter. The measure process $M$ is the information process generated by the smart meter. When $b=0$, the consumers is left with the only information of its average consumption. This situation can be interpreted as the pre-smart metering area when consumption was measured once a year. As $b$ increases and $\sigma$ remains constant, the consumer can infer more an more rapidely its true consumption compared to what he thinks it is.  The ratio ${b}/{\sigma}$ is called hereafter {\em the precision} of the device (signal over noise ratio). Moreover, because only the ratio $b/\sigma$ matters in the design of the smart meter, we will normalise it to $1$, and reduce our smart meter design to $b$. We denote by $h(b)$ the investment cost for a device with characteristics $b$. Such a function satisfies $h(0) = 0$,  and $\lim_{b\to \infty} h(b)  = +\infty$, and $h$ strictly convex. This means that it is costless to provide no information, and it is infinitely costly to provide the continuous-time information on the process $X$ to the consumer. 

On the production side, electricity is produced at a cost $g(x)$ of the form $g(x) := g_0 x +  g_1 x^2$, with $g_0,g_1>0$.  The problem of the producer that designs the device and sells electricity is given by
\begin{align}
	\inf_{b \in \mathcal A^{\rm S}} J^{\rm {S}}(b; v) :=  \inf_{b \in \mathcal A^{\rm S}} h(b) + \limsup_{T \to \infty}  \EE\Bigg[ \frac{1}{T} \int_0^T \Big( g(X_t) - p(X_t) X_t \Big) \D t\Bigg].
\end{align}

The results from Section \ref{sec:Sender_erg} will show that the producer's problem reduces to
  \begin{align*}
     \inf_{b \in \mathcal A^{\rm S}} \widetilde J^{\rm S} (b) := & \inf_{b \in \mathcal A^{\rm S}} h(b)+ \EE  \left[ g(X^*_{\infty}) - p(X^*_{\infty}) X^*_{\infty} \right],
    \end{align*}
where $X^*_{\infty}$ denotes the stationary value of the consumption process under optimal response of the Receiver to the message process $M$.

\medskip

The case of an interacting crowd of consumers follows directly from the formulation of the $N$-consumers situation. 
We assume that all the consumers are indistinguishable and that each of them has a smart meter device with the same characteristics in his household. 
Interaction between consumers occurs through the price function which in this case is defined as $p(\bar x)$, with $N \bar x := \sum_{i=1}^N x_i$ the average consumption rate. 
The objective functional of consumer $i$ is thus
\begin{align}
&	J^{\rm {R},i}(v^i)  := \liminf_{T\to +\infty} \EE\Bigg[ \frac1T \int_0^T \Big( u(X^i_t,v^i _t) - p(\bar X_t) X^i_t \Big) \D t \Bigg], \\
 \text{where} \quad	
 & \D X^i_t = -\kappa (X^i_t - \ell) \D t + v^i_t \D t + \D W^i_t, \quad X_0^i =x_0^i>0,
\end{align}
with all $W^i$'s being independent standard Brownian motions, and $v^i$ being the control of the $i^{\text{th}}$ player. 
Thanks to the device installed in his house, each consumer has access to a message process
\begin{align}
	\D M^i_t = bX^i_t \D t  + \sigma \D B^i_t, \quad M_0^i =0,
\end{align}
where the parameters of the device are the same for all consumers and where all $B^i$'s are independent standard Brownian motions. The objective of the producer is to minimise
\begin{align}
	J^{\rm{S}}(b) :=  h(b) + \limsup_{T \to \infty} \frac{1}{T} \EE\Big[ \int_0^T \Big( \frac1N \sum_{i=1}^N g(X^i_t) - p(\bar X_t) \bar X_t \Big) \D t\Big].
\end{align}

Formally, taking the limit for $N\to\infty$ in the game above, we obtain the mean field game of the electricity demand-response matching problem, which might be described as follows.
The representative consumer is maximising his utility of electricity consumption minus adjustment cost and electricity bills, which in this case depends on the behaviour of the whole system:
\begin{align}
	&J^{\rm{R}}(v)  := \liminf_{T\to +\infty} \EE\Bigg[ \frac1T \int_0^T \Big( u(X_t,v_t) - p(m) X_t \Big) \D t \Bigg], \\
\text{where} \quad &	\D X_t = -\kappa (X_t - \ell) \D t + v_t \D t + \D W_t, 
\end{align}
for $W$ standard Brownian motion, and $v$ the control of the representative player. 
The rate of information received by the representative consumer via the device installed in his home has the following dynamics:
\begin{align}
	\D M_t = bX_t \D t  + \sigma \D B_t, \quad M_0 =0,
\end{align}
where  $B$ is an independent standard Brownian motion. 
The objective of the authority is to minimise
\begin{align}
	J^{\rm{S}}(b) :=  h(b) + \limsup_{T \to \infty} \frac{1}{T} \EE\Big[ \int_0^T \Big( g(X_t) - p(m) m \Big) \D t\Big].
\end{align}
Let us notice that the fixed point condition ensures that, at the equilibrium, it should hold
\begin{align}
	m = \lim_{T \to \infty} \EE[X_T],
\end{align}
provided such a limit exists.

\medskip

The solution and the numerical illustrations of these two problems are given in Section~\ref{ssec:res-smart}.

\subsection{Carbon footprint accounting rules}\label{sec:carbon}

The global shift towards decarbonising electricity generation through renewable energy is just one of the strategies being employed to achieve carbon neutrality by 2050, as outlined in the Paris Agreement.
Another important pillar of carbon emission reduction is the extra-financial reporting of corporate firms, specifically through ESG (Environmental, Social, and Governance) indicators.

In the European Union, extra-financial reporting has taken the form of a specific Directive, the Corporate Sustainable Reporting Disclosure (CSRD)(see  \cite{Directive22}) only in 2022. The main purpose of this Directive is to compel firms to disclose information on their level of direct and indirect emissions in order either to foster a competitive emulation among companies striving to be best-in-class (the soft version) or to expose the true sustainability levels of each firm, enabling regulators to adjust legislation and investors to modify their portfolios (the hard version). In any case, this new regulation induces additional costs to firms who must gather new data.  For instance, in France, the {\em Cour des Comptes} estimates that the cost for each firm to comply  with the EU Directive on CSRD ranges between 40 k\euro~and 320~k\euro~per year, with additional auditing costs ranging from 67~k\euro~to 540~k\euro~(see \cite{Cour24}, Sec. 2.I.A.3). 

\medskip

The CSRD, as well as alternative standards for reporting extra-financial indicators, can be interpreted as an information device to the public. The rules can be flexible, allowing firms significant freedom in how they report their emissions, which may result in limited information production and lower reporting costs.  On the contrary, as it seems to be the trend, the reporting rules can be strict, specific and inflexible, leaving firms with little choice but to bear higher reporting costs, ultimately leading to the production of higher-quality information. 

\medskip

We design a persuasion model to assess the trade-off faced by a regulator between enforcing a high level of carbon footprint reporting stringency and the induced costs incurred by firms. To remain in the general framework presented in Section~\ref{sec:theory}, we assume that the Receivers of the information are the firms themselves. Their objective is to achieve a compromise between reducing their carbon footprint to a target level and to be below the average carbon footprint of their peers, in a form of best-in-class emulation process.

\medskip

We consider the following dynamics for the carbon footprint of the representative agent of the population of firms
\begin{align}
    \D X_t = -\kappa (X_t  - \ell) \D t +  v_t \D t + \D W_t, \quad X_0 = x_0,
\end{align}
where $\ell>0$ represents the long term average of carbon footprint, $\kappa$ is the speed of mean-reversion and the process $(v_t)_{t\geq 0}$ is a possible action that the firm can make to change its footprint. 
The representative firm minimises a criterion given by
\begin{align}
    J^{\rm R}(v) 
    := \limsup_{T\to +\infty} \frac{1}{T}\int_0 ^T \EE\Big[ c(v_t) + \lambda_q (X_t - q)^2 + \lambda_a (X_t-a)^2 \Big] \D t,
\end{align}
with $\gamma>0$, $c(v) := \frac12 \frac{v^2}{\gamma}$ the cost of control and $\lambda_a, \lambda_q$ non-negative parameters and where $q \in \RR$ is to be determined at equilibrium by the (mean-field) consistency condition:  
\begin{align*}
q =\lim_{T\to +\infty}\mathbb E[X_T] - \epsilon \sigma_T,
\end{align*} 
where $\sigma_T$ is the standard deviation of $X_T$, $\epsilon$ is the {\em level of best-in-class target} and $\sigma_\infty$ is the stationary standard-deviation of the carbon footprint process. 
We note that $\lambda_q$ does not depend on $q$. It is just a notation we have set to recall the fact that this is the weight on the best-in-class target $q$ in the objective function of the firm.

When $\epsilon=0$, the representative firm only wishes to be close to the average of the crowd of firm. 
On the other hand, when $\epsilon>0$, it means that the representative firm wishes to have a carbon footprint below the average, i.e.,  to be a best-in-class company in terms of carbon footprint reduction. 
The non-negative parameters $\lambda_q$ and $\lambda_a$ capture the firm's preferences between striving to be the best-in-class  versus sticking to his own target carbon footprint, $a$, that can actually differ from the carbon footprint natural value of $\ell$. For a $\lambda_q$ close to $1$ and $\lambda_a$ near to zero, firms have a high tendency to lower their carbon footprint below the average level of their peers. This features captures  preferences in the spirit of ranking games (see \cite{Bayraktar16} and the references therein).

The accounting rules on carbon footprint is represented as a device $(b,\sigma) \in \mathbb R \times (0,+\infty)$ such that the information process $M$, whose dynamics is given by
\begin{align}
    \D M_t = bX_t \D t  + \sigma \D B_t, \quad M_0 =0,
\end{align}
provides an information to the firm on its true carbon footprint. 
In particular, when $b=0$, the accounting norms are so mild that firms have no information on their carbon footprint. 

The regulator's objective is to minimise the potential damage caused by the firms' carbon footprint. To capture this, we consider a simple quadratic cost function, yielding to the following optimisation criterion:
\begin{align}
    \inf_{(b,\sigma) \in \mathcal A^{\rm S}} h(b,\sigma) +  \limsup_{T \to \infty} \frac{1}{T} \int_0 ^T \frac12 \EE\big[   c (X_t^*)^2\big] \D t, 
\end{align}
with $c>0$ and where $X^\ast$ is the carbon footprint process at the mean-field equilibrium. 
The regulator wishes to find the most cost-effective set of reporting rules to achieve the maximum reduction of firms' carbon footprint. 

\medskip

The solution and numerical illustration of this problem are provided in Section~\ref{ssec:res-carbon}.

\section{Linear-quadratic ergodic persuasion game}\label{sec:theory}

Let us consider a complete probability space $(\Omega, \mathcal{F}, \mathbb{P})$ endowed with two independent Brownian motions, $W$ and $B$, of dimensions $d_W$ and $d_B$. We denote by $\mathbb F^{W,B} = (\mathcal F_t^{B,W})_{t\geq 0}$ the filtration generated by $B,W$ and completed with the $\mathbb P$-negligible events in $\mathcal F$, so fulfilling the usual conditions of right-continuity and $\mathbb P$-completeness.
 
We focus on a game between a \emph{Sender} and a \emph{Receiver} of information, also called the agents, who are indicated, respectively, by S and R.  
Borrowing the terminology from stochastic filtering theory, the $\RR^{d_W}$-valued stochastic process $X= (X_t)_{t \geq 0}$ is called unobservable \textit{signal}, since the Receiver has no access to it, while he has access to the $\RR^{d_B}$-valued process $M = (M_t)_{t \geq 0}$. 
The latter represents the \emph{message} that the Sender is providing to him. We assume the following linear dynamics for the state variables: 
\begin{align}	
	& \D X_t = (A_{\rm x} X_t + B_{\rm x} v_t + c_{\rm x})\D t +  \D W_t , \quad X_0=x_0, \label{dynX-gen} \\
	& \D M_t = b X_t \D t +  \Sigma \D B_t, \quad M_0=0  \label{dynM-gen}
\end{align}
where $A_{\rm x} \in \mathcal S_{d_W}(\RR) $, $B_{\rm x}\in \RR^{d_W\times r}$ ($r$ is the dimension of the control $v$), $c_{\rm x} \in \RR^{d_W}$, $\Sigma \in \RR^{d_B \times d_B}$, $b \in \RR^{d_B \times d_W}$, and $x_0 \in\RR^{d_W}$ is a real constant.
The Receiver's control is any stochastic process $v$ with values in $\RR^r$ satisfying suitable measurability and integrability conditions that we give later in this section. The Sender's control is a deterministic, time-independent pair of matrices $(b, \Sigma)\in \RR^{d_B\times d_W} \times \RR^{d_B \times d_B}$.

The Receiver's information is modelled by the complete and right-continuous filtration $\mathbb F^{\rm R} := (\mathcal F_t ^{\rm R})_{t \geq 0}$ defined as
\begin{equation}\label{eq:filtrations_R}
    \mathcal{F}_t^{\rm R} := \bigcap_{\epsilon >0} \sigma(M_s: 0 \le s \le t+ \epsilon ) \vee\mathcal N, \quad t \geq 0,
\end{equation} 
where $\mathcal N$ is the set of all $\mathbb P$-null events in $\mathcal F$. In other words, the Receiver has access only to the information incorporated in the message, that is in turn controlled by the Sender.

The Receiver and the Sender aim at minimising the so-called long-time average cost functionals, defined, respectively, by
\begin{align*}
    J^{\rm R} (v;b,\Sigma)& :=\limsup_{T \to \infty}\frac{1}{T}\EE \left[ \int_0^T f(X_t,v_t) \D t \right],
    \\
    J^{\rm S} (b,\Sigma ; v) &:= \limsup_{T\to \infty} \frac{1}{T} \EE \left[\int_0^T g(X_{t},v_t) \D t\right]+ h(b,\Sigma),  \end{align*}
where
\begin{equation}\label{eq:f}
    f(x,v)  = x^\top F_2 x +F_1 ^\top x + F_0 + v^\top C_2 v + C_1 ^\top v , \quad (x,v) \in \RR^{d_W + r},
\end{equation}
with 
$F_2 \in \mathcal S_{d_W}(\RR)$, $F_1 \in \RR^{d_W}$, $F_0 \in \mathbb R$, $C_2 \in \mathcal S_r(\RR)$, and  $C_1 \in \RR^{r}$.  
Furthermore, we set $h : \RR^{d_B \times d_W} \times \RR^{d_B \times d_B} \to \RR$, measurable, to be the cost associated to the controls of the Sender and $g$ to be a suitable measurable function that will be specified in the applications.

We need some standing technical assumptions on the coefficients. Before, let us give some definitions that we use to formulate them.
\begin{definition}\label{def:stab_obs}
Given matrices $A \in {\mathbb R}^{n \times n}, B \in {\mathbb R}^{n \times m}$ and $C \in {\mathbb R}^{p \times n}$, we say that 
\begin{itemize}
\item[a)] The pair $(A, B)$ is {\em{stabilisable}} if there exists a constant matrix $K \in {\mathbb R}^{m \times n}$ such that $A + B  K$ is {\em{stable}} (namely, all its eigenvalues have negative real parts).
\item[b)] The pair $(C, A)$ is {\em{observable}} (or equivalently $(A^\top, C^\top)$ is {\em{controllable}}) if the $(n \times np)$-matrix $\Gamma(A^\top,C^\top) =[ C^\top, A^\top C^\top , \ldots, {A^\top}^{n-1} C^\top ] $ has rank $n$.
\end{itemize}
\end{definition}
Now, we can give our standing assumptions.

\begin{assumption}\label{ass_coeff} Let the coefficients satisfy the following properties:
	\begin{itemize}
		\item[i)] The pair $(A_{\rm x}, b^\top)$ is {\em{stabilisable}};
		\item[ii)] $F_2 \ge 0$ (positive semi-definite);
		\item[iii)] $ B_{\rm x} C_2^{-1} B_{\rm x}^\top \ge 0$ (positive semi-definite);
		\item[iv)] $(-A_{\rm x}, B_{\rm x} C_2^{-1} B_{\rm x}^\top)$ stabilisable;
		\item[v)] $(F_2,-A_{\rm x})$ detectable, or equivalently $(-A_{\rm x}^\top,F_2^\top)$ stabilisable.
	\end{itemize}
\end{assumption}

As mentioned above, the Sender's admissible control set is given by a fixed subset of all deterministic controls $(b,\Sigma) \in \RR^{d_B \times d_W} \times \RR^{d_B \times d_B}$ with $\Sigma$ invertible. Hence we denote by $\mathcal A^{\rm S}$ the set of all such controls,
while the set of all admissible control processes for the Receiver is given by the following definition.

\begin{definition}\label{adm-case1} 
    We say that any $\RR^r$-valued stochastic process $v$ is \emph{admissible} if it belongs to $\mathcal H_r ^2(\mathbb F^{B,W})$, it is $\mathbb F^{\rm R}$-progressively measurable and it satisfies
    \[ \lim_{T \to \infty} \frac{1}{T} \mathbb E[ |X_T |^2] =0. \]
   The set of admissible processes is denoted by $\mathcal A^{\rm R}$.
   \end{definition}

\begin{remark}
Observe that the filtration $\mathbb F^{\rm R}$ depends implicitly on the control process $v$, that we omitted from the notation for simplicity. Indeed, $v$ affects $X$, which in turns affects $M$, which generates $\mathbb F^{\rm R}$. So, defining the Receiver's admissible controls as simply those processes which are $\mathbb F^{\rm R}$-progressively measurable would have resulted in some circularity. The definition above addresses this issue as follows: first, given any $v \in \mathcal H_r ^2(\mathbb F^{B,W})$, the state variable $(X,M)$ as well as the filtration $\mathbb F^{\rm R}$ are well defined and they all depend on $v$; second, we have restricted Receiver's choice to those controls in $\mathcal H_r ^2(\mathbb F^{B,W})$ which are also $\mathbb F^{\rm R}$-progressive measurable and satisfy the limiting condition above. This procedure is pretty standard in stochastic control with partial observation, see, e.g., the recent paper \cite{cohen25} and the discussion therein. \end{remark}

Let us notice that the linear-quadratic structure of the Receiver's objective functional together with the linear dynamics of the state grant the existence and uniqueness of solutions for the system in Equation \eqref{dynM-gen} over $[0,T]$, for each $T>0$, and the well-posedness of  the Receiver's control problem.
Furthermore, notice that they are consistent with the general admissibility conditions stated in Section \ref{sec:ver_thm}.
\medskip

The  problem is modelled via a Stackelberg stochastic differential game with ergodic criteria where the leader and the follower are, respectively, the Sender and the Receiver. Since the leader moves first, she needs to solve the Receiver's problem in order to find her optimal control. Therefore the game is solved according to the following two steps:
\begin{itemize}   
\item[1.] 
    For a fixed Sender's control $(b,\Sigma)\in \mathcal A^{\rm S}$, we solve first the corresponding Receiver control problem
    \[ \inf_{v \in \mathcal A^{\rm R}} J^{\rm R} (v;b,\Sigma). \]
    Let us denote by $v^*(b,\Sigma)$ its unique solution, which possibly depends on the Sender's control $(b,\Sigma)$. We compute the solution in the next sub-section. \medskip
    
\item[2.] 
    After step 1 above, the Sender knows the optimal control that the Receiver applies for any of her controls $(b,\Sigma)$. So, she optimises her objective functional by solving 
    \begin{align}\label{eq:JS}
        \inf_{(b,\Sigma) \in \mathcal A^{\rm S}} J^{\rm S} (b,\Sigma; v^* (b,\Sigma)) := & \inf_{(b , \Sigma)\in \mathcal A^{\rm S}} \left\{ \limsup_{T\to \infty} \frac{1}{T} \EE \left[\int_0^T g(X^*_{t},v^*_t (b,\Sigma)) \D t\right] + h(b, \Sigma)  \right\} 
        \qquad \qquad 
    \end{align}
    where $X^*$ is the state controlled by the Receiver's optimal strategy $v^* (b,\Sigma)$. 
\end{itemize}

 Thus, we first provide the optimal control of the Receiver for a given Sender's admissible control $(b,\Sigma)$ (in Section~\ref{ssec:Receiver_gen}) and then we turn to the optimisation of the Sender's criteria (in Section~\ref{sec:Sender_erg}).

\subsection{The Receiver's problem}\label{ssec:Receiver_gen}

Let $(b,\Sigma) \in \mathcal A^{\rm S}$. We recall that the Receiver's problem is given by
\begin{align*}
    \inf_{v \in \mathcal A^{\rm R}}  J^{\rm R}(v;b,\Sigma) := \inf_{v \in \mathcal A^{\rm R}} \limsup_{T \to \infty}\frac{1}{T}\EE \left[ \int_0^T f(X_t,v_t) \D t \right]
\end{align*}
where the dynamics of the state $X$ is as in Equations \eqref{dynM-gen}. Notice that the objective functional depends on $(b,\Sigma)$ via the filtration $\mathbb F^{\rm R}$. Indeed the latter is generated by the message $M$, which is controlled by the Sender.

\begin{remark}\label{rem:sigma}
    The parameters $b$ and $\Sigma$, which are the deterministic controls of the Sender, are linked to the so-called informational gain that the Receiver can get from the observation of the message process $M$. In particular, the larger the quantity $\widetilde{b}:= \Sigma^{-1}b$, the faster for the Receiver to estimate the value of the hidden process $X$. 
    Indeed, the model introduced before is equivalent to the one where the process $M$ is replaced by
    	\begin{align}\label{eq:dyn-tildeM}
    		d\widetilde{M}_t &= \widetilde{b} X_t \D t +   \D B_t, \qquad \widetilde{M}_0=0,
    	\end{align}
    with $\widetilde{b} \in \mathbb{R}$.
    Indeed, it suffices to multiply $M$ and $b$ by the inverse of the matrix $\Sigma$, namely by introducing 
    $\widetilde{M}:=\Sigma^{-1} M$ and  
    $\widetilde{b}:=\Sigma^{-1} b$, with $\Sigma^{-1} \in \RR^{d_B \times d_B}$.
    In particular, the limit case $\| \Sigma \| \to 0$, which represents the full information case, corresponds to $|\widetilde b | \to +\infty$. Observe that the filtration of the Receiver does not change after such a normalisation. Thus, for the rest of the paper we assume without loss of generality that $\Sigma$ is the identity matrix. Indeed, if not we can always reduce to that case by means of the transformation above. Notice that, assuming $\Sigma = \mathbf 1$, the set of admissible Sender's controls, that is still denoted $\mathcal A^{\rm S}$, is a fixed subset of all matrices $b \in \RR^{d_B \times d_W}$. In particular, in the applications, where $d_W = d_B =1$, we have $\mathcal A^{\rm S} = \RR_+ := [0,+\infty)$. 
\end{remark}

\subsubsection{Filtering and problem reformulation}
According to the remark above, let us assume from now on that $\Sigma$ is the identity matrix. Let us recall that the Receiver has to estimate the unobserved process $X$ given the observation process $M$, with dynamics:
\begin{align}	
	& \D X_t = (A_{\rm x} X_t + B_{\rm x} v_t + c_{\rm x})\D t +  \D W_t , \quad X_0=x_0,  \\
	& \D M_t = b X_t \D t +  \D B_t, \quad M_0=0. 
\end{align}
This is a well known task in the literature, known as stochastic filtering. In the uncontrolled case, i.e., $B_{\rm x} \equiv 0$, Kalman and Bucy were the first to solve the problem and for this we refer, e.g., to \cite{Kalman63} or to \cite[Section 4.4]{Davis_book}. In the case when $B_{\rm x} \neq 0$ this problem has also already been studied and we refer to \cite[Section 5.3]{Davis_book}.

In mathematical terms, we are interested in the following projection, the filter process of $X$:
\begin{align}
    \widehat{X}_t := \EE[X_t | \mathcal{F}_t^{\rm R}], \quad t \ge 0,
\end{align}
where $\mathcal F_t^{\rm R}$ was defined in Equation\eqref{eq:filtrations_R}. 
Following \cite[Equation (5.49)]{Davis_book} we find
\begin{align} \label{dynobs-gen}
    d\widehat X_t  = (A_{\rm x} \widehat X_t + B_{\rm x} v_t + c_{\rm x}) \D t +  P_{b} (t) b^\top dI_t, \quad \widehat X_0 = x_0,
\end{align}
where the stochastic process $I$, called the innovation process, is the $\mathbb F^{\rm R}$-Brownian motion
$$
dI_t  = \D M_t - b \widehat X_t \D t, \quad t \ge 0,
$$
and where $P_b$ is the $(d_W \times d_W)$-dimensional error covariance matrix
\begin{equation}\label{eq:P_b}
P_b (t) = \EE \left[ (X_t -\widehat{X}_t) (X_t -\widehat{X}_t)^\top \big| \Ff^{\rm R}_t \right] = \widehat{X^2 _t}- (\widehat{X}_t)^2 = \EE \left[ (X_t -\widehat{X}_t)(X_t -\widehat{X}_t)^\top \right]  
\end{equation}
which is a deterministic function of time, independent of the control, and solving the matrix Riccati equation (see \cite{Davis_book}, Section 5.3 and Equation (4.40)):
\begin{equation}\label{eq:ODE_P}
 P'_b (t) = A_{\rm x} P_{b}(t) + P_{b}(t) A_{\rm x}^\top - P_{b}(t) b^\top  b P_{b}(t) + \mathbf{1}, \quad P_{b}(0) = \V[X_0]=\V[x_0] = 0.
\end{equation}

Since we are interested in solving an ergodic control problem, we need to check that the system has a good ``long-run'' performance, i.e., we have to study the stability of the filter at infinity. As well explained in \cite[Section 5.4]{Davis_book}, in the case when the coefficient matrices do not depend on time, this boils down to studying the asymptotic properties of the matrix Riccati equation \eqref{eq:ODE_P}.  

Exploiting \cite[Theorem 2.1]{Wohnam68}, we have that, under Assumption \ref{ass_coeff} $i)$, there exists a positive definite matrix $P_b(\infty)$, which reads:
\begin{equation}
P_b(\infty) = \lim_{t \rightarrow +\infty} P_b(t).
\end{equation}
In this case, $P_b(\infty)$ is the unique positive semi-definite solution of the Algebraic Riccati Equation (ARE) 
\begin{equation}
A_{\rm x} P_b + P_b A_{\rm x}^\top - P_b b^\top b P_b +  \mathbf{1} =0
\end{equation}
and the $(d_W \times d_W)$-matrix $A_{\rm x} - b^\top b P_\infty$ is stable.

\begin{remark}
Theorem 2.1 in \cite{Wohnam68} also requires the pair $(\mathbf 1, A_{\rm x})$ to be observable, which is automatically satisfied.
\end{remark}

Using the tower property of conditional expectation given $\mathcal F_t ^{\rm R}$, the $\mathcal F^{\rm R}$-progressive measurability of the control $v$ and the linear-quadratic structure of $f$, we can rewrite the Receiver's objective functional as
\begin{align*}
    J^{\rm R}(v;b) 
    & = \limsup_{T \to \infty}\frac{1}{T}\EE \left[ \int_0^T f(X_t,v_t) \D t \right]\\
    & = \limsup_{T \to \infty}\frac{1}{T}\Bigg\{ \EE \left[ \int_0^T f(\widehat{X}_t,v_t) \D t \right] + \int_0^T \underbrace{\EE \left[ (X_t -\widehat{X}_t)^\top F_2 (X_t -\widehat{X}_t) \right]}_{\sum_{i,j=1}^{d_W} F_2 ^{i,j} P^{i,j} _b(t) }\D t\Bigg\}.
\end{align*}
Thus, given the fact that $P_b(t)$ in Equation \eqref{eq:P_b} is a deterministic function of time independent of the Receiver's control, neglecting the constant terms, we are led to consider the following equivalent problem
\begin{align} \label{Receiver_pb}
    \inf_{v \in \mathcal A^{\rm R}}  \widetilde J^{\rm R}(v ; b) 
    & := \inf_{v \in \mathcal A^{\rm R}} \limsup_{T \to \infty}\frac{1}{T}\EE \left[ \int_0^T f (\widehat{X}_t,v_t) \D t \right],
\end{align}
where the dynamics of the observable state $\widehat{X}$ is given in \eqref{dynobs-gen}. 
Now, we notice that, due to the presence of the time-dependent function $P_b$, we are dealing with a stochastic control problem with an ergodic criterion but time-inhomogeneous state dynamics. To the best of our knowledge, this case is not covered by the ergodic stochastic control literature. We are going to solve it by means of the following verification theorem, that we state below for a slightly more general setting. 

\subsubsection{A time-inhomogeneous version of the verification theorem with ergodic criterion}\label{sec:ver_thm}
Let $(\Omega, \mathcal F, \mathbb P)$ be a complete probability space equipped with a filtration $\overline{\mathbb F}:=(\overline{\mathcal F}_t)_{t \geq 0}$, satisfying the usual conditions of right-continuity and $\mathbb P$-completeness, supporting an $m$-dimensional $\overline{\mathbb F}$-Brownian motion $\overline W$. We consider the stochastic control problem
\[ 
\inf_{\nu \in \mathcal U} J(\nu) := \inf_{\nu \in \mathcal U} \limsup_{T \to \infty} \frac{1}{T} \mathbb E\left[ \int_0 ^T f(Y_t , \nu_t) \D t \right],
\]
where $Y = Y^\nu$ is an $n$-dimensional stochastic process solving
\[ 
\D Y_t = \mu(Y_t, \nu_t) \D t + \sigma(t) \D \overline W_t, \quad Y_0 = y_0 \in \mathbb R^n,
\]
with $\mu : \mathbb R^n \times \mathbb R^d \to \mathbb R^n$ measurable, $\sigma(t)$ semi-positive definite and deterministic matrix in $ L^2(\mathbb R_+^{n \times m})$,
and $\nu$ belonging to the set of admissible controls $\mathcal U$, which is the set of all processes in $\mathcal H^2 _d (\overline{\mathbb F})$ such that
\[\lim_{T \to +\infty} \frac{1}{T}\mathbb E\left[ | Y^\nu _T|^2 \right] = 0.\]
Moreover we assume that $\mu (y,\nu)$ is Lipschitz continuous in $y$ uniformly in $\nu,$ so that $Y$ is well defined as the unique strong solution of the SDE above. Finally, the usual a-priori $L^2$ estimates on $Y$ holds true, i.e., for all $q \geq 1$ and $T >0$, there exists some constant $C_{T,q}>0$ such that \[\mathbb E\left[\sup_{t \in [0,T]}| Y^\nu _t |^q \right] \leq C_{T,q}(1+|y_0|^q).\]
Notice that the dynamics of the state variable $Y$ is time-inhomogeneous due to coefficient $\sigma(t)$. We can nonetheless provide a verification theorem under the assumption that $\sigma(t)$ has a limit for $t \to \infty$, which allows to proceed as in the standard time-homogeneous case.
\begin{theorem}[Verification theorem] Assume that there exists $\sigma_\infty \in \mathbb R_+^{n \times m}$ such that
\begin{equation}\label{lim-sigma} \frac{1}{T}\int_0 ^T \| \sigma^\top (t)\sigma (t) - \sigma^\top _\infty\sigma_\infty \| \D t \to 0, \quad T \to +\infty.\end{equation}
Moreover, let $V \in C^2 (\mathbb R^n)$ with at most quadratic growth and let the Hessian matrix $D^2 V$ be bounded, and $\zeta \in \mathbb R$ such that
\begin{equation}\label{HJB-erg}
\inf_{u \in \mathbb R^d} \big (\mathcal L_\infty ^u V(y) + f(y,u)\big) - \zeta = 0, \quad y \in \mathbb R^n,
\end{equation}
where
\[ \mathcal L_\infty ^{u} \varphi (y):= \mu(y,u) \nabla \varphi (y) + \frac12 \text{Tr} \left( \sigma^\top _\infty D^2 \varphi(y) \sigma_\infty \right) ,\]
for all functions $\varphi \in C^2(\mathbb R^n)$.
Assume that there exists a measurable function $\nu^*: \mathbb R^n \to \mathbb R^d$ attaining the infimum in \eqref{HJB-erg}. 
Then, if $\nu^*_t := \nu^*(Y_t)$, $t\geq 0$, belongs to $\mathcal U$, it is an optimal control and $\zeta = J(\nu^*)$.
\end{theorem}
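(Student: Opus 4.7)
The plan is to argue via Itô's formula applied to $V(Y_t^\nu)$ for an arbitrary admissible $\nu\in \mathcal U$ and to identify, after dividing by $T$ and taking $\limsup$, the quantity $\zeta$ as a lower bound for $J(\nu)$ that is attained by the candidate feedback $\nu^\ast$. The key trick for coping with the time-inhomogeneity is to rewrite the full generator as $\mathcal L_\infty^{\nu_t}$ plus a ``correction'' trace term involving $\sigma^\top(t)\sigma(t)-\sigma_\infty^\top\sigma_\infty$, and then use the bounded Hessian assumption together with the Cesàro-type convergence \eqref{lim-sigma} to send the correction to zero in the limit.

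More precisely, first I would apply Itô's formula to $V(Y_t^\nu)$ on $[0,T]$, obtaining
\begin{equation*}
V(Y_T^\nu)-V(y_0)=\int_0^T \!\!\mathcal L_\infty^{\nu_t} V(Y_t^\nu)\,\D t+\tfrac12\int_0^T\!\!\mathrm{Tr}\!\left((\sigma^\top(t)\sigma(t)-\sigma_\infty^\top\sigma_\infty)D^2V(Y_t^\nu)\right)\D t+\mathcal M_T,
\end{equation*}
where $\mathcal M_T=\int_0^T (\nabla V(Y_t^\nu))^\top\sigma(t)\,\D \overline W_t$. Because $V$ has quadratic growth and $D^2V$ is bounded, $\nabla V$ has at most linear growth, so the a priori $L^2$ estimate on $Y^\nu$ together with $\sigma\in L^2$ yields $\EE[\mathcal M_T]=0$. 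Inserting the HJB inequality $\mathcal L_\infty^{\nu_t}V(Y_t^\nu)+f(Y_t^\nu,\nu_t)\geq \zeta$ (with equality when $\nu=\nu^\ast$), taking expectations and rearranging gives
\begin{equation*}
\frac{1}{T}\EE\!\left[\int_0^T f(Y_t^\nu,\nu_t)\,\D t\right]\geq \zeta+\frac{V(y_0)-\EE[V(Y_T^\nu)]}{T}-\frac{1}{2T}\EE\!\left[\int_0^T\!\!\mathrm{Tr}\!\left((\sigma^\top(t)\sigma(t)-\sigma_\infty^\top\sigma_\infty)D^2V(Y_t^\nu)\right)\D t\right].
\end{equation*}

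The last step is to show that both correction terms vanish as $T\to\infty$. For the boundary term, the quadratic growth bound $|V(y)|\leq C(1+|y|^2)$ combined with the admissibility condition $T^{-1}\EE[|Y_T^\nu|^2]\to 0$ yields $T^{-1}\EE[V(Y_T^\nu)]\to 0$. For the trace term, boundedness of $D^2V$ gives a pointwise estimate
\begin{equation*}
\left|\mathrm{Tr}\!\left((\sigma^\top(t)\sigma(t)-\sigma_\infty^\top\sigma_\infty)D^2V(Y_t^\nu)\right)\right|\leq \|D^2V\|_\infty\,\|\sigma^\top(t)\sigma(t)-\sigma_\infty^\top\sigma_\infty\|,
\end{equation*}
which is deterministic, so Fubini plus hypothesis \eqref{lim-sigma} send the Cesàro average to zero. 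Taking $\limsup_{T\to\infty}$ on both sides produces $J(\nu)\geq \zeta$, and applying the same computation to $\nu^\ast$ (for which the HJB inequality is an equality by construction) yields $J(\nu^\ast)=\zeta$, proving optimality.

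The main obstacle I expect is the careful justification of the vanishing of the trace correction: it would have been false without the bounded Hessian assumption (which is precisely why it is imposed), and one must verify that \eqref{lim-sigma} suffices rather than needing pointwise convergence of $\sigma(t)$. The rest is a mild adaptation of the classical time-homogeneous ergodic verification argument (see e.g.\ the treatment in Bensoussan), with the admissibility class $\mathcal U$ tailored exactly so that the boundary term $T^{-1}\EE[V(Y_T^\nu)]$ dies out; checking that the candidate feedback $\nu^\ast_t=\nu^\ast(Y_t)$ indeed lies in $\mathcal U$ is left as a hypothesis of the statement and will be verified directly in the applications.
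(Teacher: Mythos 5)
Your proposal is correct and follows essentially the same route as the paper's proof: Itô's formula, splitting the generator into $\mathcal L_\infty^{\nu_t}$ plus a trace correction, killing the martingale term via the a priori estimates, the boundary term via admissibility and quadratic growth, and the correction term via the bounded Hessian together with the Ces\`aro convergence \eqref{lim-sigma}, then repeating with equalities for $\nu^*$. No substantive differences.
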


\begin{proof} Let $\nu \in \mathcal U$ be given. Applying It\^o's formula to $V(Y_t)$ over $[0,T]$, dividing by $T$ and taking the expectation on both sides lead to
\begin{align*}
\frac{1}{T} \mathbb E [ V(Y_T) - V(y_0)] &= \frac{1}{T} \mathbb E\left[\int_0 ^T \mathcal L_t ^{\nu_t} V(Y_t) \D t +\int_0 ^T \nabla V(Y_t) \sigma(t) d \overline W_t \right] \\
& = \frac{1}{T} \mathbb E\left[\int_0 ^T \mathcal L_t ^{\nu_t} V(Y_t) \D t \right],
\end{align*}
where the above It\^o integral has zero expectation thanks to the fact that its integrand is square-integrable, i.e., $\mathbb E[\int_0 ^T |\nabla V(Y_t) \sigma(t)|^2 \D t]< \infty$, for all $T>0$. This follows from the $\mathcal C^2$-regularity of $V$, the boundedness of the Hessian matrix $D^2 V$, the square integrability of $\sigma(t)$ and the a-priori estimates of $Y$.
Adding and subtracting $\mathcal L_\infty ^{\nu_t} V(Y_t)$ and $f(Y_t, \nu_t)$ inside the integral in the RHS and using the PDE \eqref{HJB-erg} yield
\[ \frac{1}{T} \mathbb E [ V(Y_T) - V(y_0)] \geq \frac{1}{T} \mathbb E\left[\int_0 ^T (\mathcal L_t ^{\nu_t} - \mathcal L_\infty ^{\nu_t})V(Y_t) \D t \right] - \frac{1}{T} \mathbb E\left[\int_0 ^T f(Y_t, \nu_t) \D t\right] + \zeta.\]
Using assumption in Equation \eqref{lim-sigma} and the fact that $D^2 V$ is bounded, we obtain
\begin{align*}
\frac{1}{T} \mathbb E\left[\int_0 ^T |(\mathcal L_t ^{\nu_t} - \mathcal L_\infty ^{\nu_t})V(Y_t)| \D t \right] & = \frac{1}{2T} \mathbb E\left[\int_0 ^T |\textrm{Tr} \left( \sigma^\top(t) D^2 V(Y_t) \sigma(t) \right) - \textrm{Tr} \left( \sigma^\top _\infty D^2 V(Y_t) \sigma_\infty \right)| \D t \right] \\
& \le  \frac{1}{2T} \mathbb E\left[\int_0 ^T \| \sigma^\top (t)\sigma (t) - \sigma^\top _\infty \sigma_\infty \| \cdot \| D^2 V(Y_t)\| \D t \right] 
\end{align*}
which goes to zero for $T \to +\infty$.
Moreover, since $\nu$ is admissible and $V$ has at most quadratic growth, we have 
\[ \frac{1}{T} \mathbb E [ V(Y_T) - V(y_0)] \to 0, \quad T \to +\infty.\]
Therefore we obtain
\[ \zeta \leq \limsup_{T \to \infty} \frac{1}{T} \mathbb E\left[\int_0 ^T f(Y_t, \nu_t) \D t\right],\]
and, since $\nu$ is arbitrary in $\mathcal U$, we have $\zeta \leq \inf_{\nu \in \mathcal U} J(\nu)$. If we repeat the same arguments for the control $\nu^*_t = u^*(Y_t)$, where the function $u^*(y)$ maximises the PDE \eqref{HJB-erg}, we get equalities everywhere leading to $\zeta = J(\nu^*)$. Since $\nu ^*$ belongs to $\mathcal U$ by assumption, we can conclude that it is an optimal control.
\end{proof}

\subsubsection{Solving the Receiver's problem} In order to apply the verification theorem in our setting, to the Receiver's problem, we notice that  
$n=d_B+d_W$, $m=d_B$, $d=r$, with $\nu=v$ and $Y=\widehat X$, $\overline W = I$, and the drift and volatility coefficients read
\begin{align}
    \mu(x, v)= A_{\rm x} x + B_{\rm x} v + c_{\rm x},
    \quad \text{ and } \quad
    \sigma(t) = P_b(t)b^\top.
\end{align}
We need to check that $\sigma(t)$ verifies assumption in Equation \eqref{lim-sigma}. 
Let us recall from the previous section that $P_b(t)$ is the solution of the Riccati ODE \eqref{eq:ODE_P}, which admits a finite limit $P_b(\infty)$ for $t \to +\infty$, as established before.

Hence, it remains to show that $\frac{1}{T}\int_0 ^T |P_b(t) - P_b(\infty)|^2 \D t \to 0$, as $T \to \infty$. This follows from the elementary fact that for any given bounded measurable function $h:\mathbb R_+ \to \mathbb R_+$, such that $\lim_{t \to \infty} h(t) = 0$, one has $T^{-1}\int_0 ^T h(t) \D t \to 0$, for $T \to +\infty$.\medskip

Now, let us exploit the PDE \eqref{HJB-erg} to solve the Receiver's problem. 
In this case, Equation \eqref{HJB-erg} specialises to
\begin{align}
    \inf_{v \in \mathbb R^r} & \left\{  \nabla  V(x)^\top B_{\rm x} v + v^\top C_2 v +C_1^\top v \right\} 
    + \nabla  V(x)^\top (A_{\rm x} x +c_{\rm x}) \\
 &   + x^\top F_2 x + F_1 ^\top x + \frac{1}{2}\text{Tr}(b P_b^\top D^2 V(x)P_b b^\top)  - \zeta  =0,
\end{align}
where for the sake of a compact notation we have written $P_b$ in place of $P_b(\infty)$.
Thus, the first order condition leads to
\begin{equation}
     v^*(x) = - \frac{1}{2}C_2 ^{-1} \left( B_{\rm x} ^\top \nabla V(x) + C_1\right),
\end{equation}
leading in turn to
\begin{align}
    &\nabla  V^\top (A_{\rm x} x  - \frac{1}{2}B_{\rm x} C_2 ^{-1}(B_{\rm x} ^\top \nabla V + C_1) +c_{\rm x}) + x^\top F_2 x + F_1 ^\top x 
    + \frac{1}{4} (\nabla V^\top B_{\rm x} + C_1^\top) C_2 ^{-1}(B_{\rm x} ^\top \nabla V + C_1)  \nonumber \\
    & - \frac{1}{2}C_1 ^\top C_2 ^{-1}(B_{\rm x} ^\top \nabla V + C_1) + \frac{1}{2}\text{Tr}(b P_b^\top D^2 V P_b b^\top) = \zeta,
\end{align}
where we have written $\nabla V$ in place of $\nabla V(x)$ for short. We propose the following natural ansatz for the solution:
\begin{align}
    V(x)= x^\top G_2 x + G_1 ^\top x,
\end{align}
where $G_1 \in \RR^d$ and $G_2 \in \mathcal S_{d_W}(\RR)$ are parameters to be determined.  
By plugging the ansatz into the ODE above we get the following: 
\begin{align}
    &(2G_2 x +G_1)^\top (A_{\rm x} x  - \frac{1}{2}B_{\rm x} C_2 ^{-1}(B_{\rm x} ^\top (2G_2 x + G_1) + C_1) + c_{\rm x}) + x^\top F_2 x + F_1 ^\top x \\
    &+\frac{1}{4}(B_{\rm x} ^\top (2G_2 x + G_1) + C_1)^\top C_2 ^{-1} (B_{\rm x} ^\top (2G_2 x + G_1) + C_1) \\
    & - \frac{1}{2} C_1 ^\top C_2 ^{-1} (B_{\rm x} ^\top (2 G_2 x + G_1) + C_1) + \text{Tr}(bP_b^\top G_2 P_b b^\top) = \zeta.
\end{align}
By identifying quadratic and linear terms (with respect to $x$) and by using the symmetry of $A_{\rm x}$, we obtain the following system of equations for the couple $(G_1,G_2)$:
\begin{equation}
\label{eq:sys_G_i}
\left\{
\begin{array}{rcl}
A_{\rm x}^\top G_2 + G_2 A_{\rm x} - G_2 B_{\rm x} C_2 ^{-1} B_{\rm x} ^\top G_2 + F_2 &=& 0 \vspace{0.2cm} \\
(-G_2 B_{\rm x} C_2^{-1} B_{\rm x} ^\top + A_{\rm x} ^\top )G_1 + 2G_2 c_{\rm x} - G_2 B_{\rm x} C_2 ^{-1} C_1 + F_1  &=& 0
\end{array}
\right.
\end{equation}
while identifying the constant leads to the following expression of $\zeta$ in terms of $G_1,G_2$ and other coefficients of the model:
\[
\zeta = c_{\rm x} ^\top G_1 - \frac{1}{4} G_1^\top B_{\rm x} C_2^{-1} (B_{\rm x} ^\top G_1 + C_1) - \frac{1}{4} C_1 ^\top C_2 ^{-1} (B_{\rm x} ^\top G_1 + C_1)  + \text{Tr}(b P_b^\top G_2 P_b b^\top).
\]
Observe that the first equation in the system \eqref{eq:sys_G_i} is an ARE that can be solved independently of the second one. 
Existence and uniqueness (in a sense to be made precise) of a symmetric solution $G_2$ to the ARE 
\begin{align*}
 G_2 B_{\rm x} C_2 ^{-1} B_{\rm x} ^\top G_2 - A_{\rm x}^\top G_2-  G_2 A_{\rm x}- F_2 = 0,
\end{align*}
can be proved under Assumptions \ref{ass_coeff} $ii)-v)$ on $F_2 \in \mathcal S_{d_W}(\RR)$ and $B_{\rm x} C_2^{-1} B_{\rm x}^\top \ge 0 \in \mathcal S_{d_W}(\RR)$ (recall Definition \ref{def:stab_obs}).
Indeed, it suffices to use \cite[Corollary 8.1.11]{Lancaster95}, with $A=-A_{\rm x}$, $D= B_{\rm x} C_2^{-1} B_{\rm x}^\top$ , $C=F_2$, to obtain existence of a real symmetric solution $G_2$ to the ARE above and uniqueness of a real symmetric solution $G_2^*$ such that $-A_{\rm x} + B_{\rm x} C_2^{-1} B_{\rm x}^\top G_2^* $ is stable.

For this unique $G_2^*$, that for notational simplicity we denote by $G_2$, the vector $G_1$ is well defined and it is given by 
\begin{equation}\label{eq:G1}
     G_1 = \left(- G_2 B_{\rm x} C_2 ^{-1} B_{\rm x} ^\top + A^\top_{\rm x} \right)^{-1}(G _2 B_{\rm x} C_2 ^{-1} C_1 - 2 G_2 c_{\rm x} - F_1),
\end{equation}
provided the matrix $\Theta_1 := A_{\rm x} - B_{\rm x} C_2^{-1} B_{\rm x} ^\top G_2 \in \mathcal S_{d_W}(\RR)$ is invertible, which is granted by the following assumption (see also Section \ref{ssec:stationary} for another use of this assumption):
\begin{assumption}\label{ass_Theta1}
	All the eigenvalues of the matrix $\Theta_1$ have strictly negative real part (i.e., it is Hurwitz stable).
\end{assumption}

Thus, the optimal control for the Receiver's problem is given by
\begin{align}\label{eq:v*}
    v^*_t  = - \frac{1}{2}C_2 ^{-1}\left( B_{\rm x} ^\top(2 G_2 \widehat X_t ^* + G_1) + C_1 \right),
\end{align}
where
\begin{align}	
	& \D \widehat X_t^* = (A_{\rm x} \widehat X_t^* + B_{\rm x} v_t^* + c_{\rm x})\D t + P_b(t) b^\top \D I_t , \quad \widehat X_0 =x_0. 
\end{align}
It remains to prove that the controlled state variable above satisfies the admissibility condition $\lim_{T \to +\infty} \frac{1}{T}\mathbb E[ | \widehat X^{*} _T|^2 ] = 0$. This is a consequence of the existence of a Gaussian stationary distribution for the process $(X^*, \widehat X^*)$, that is addressed in the following section. Indeed if $(X^*, \widehat X^*)$ has a Gaussian stationary distribution, the same holds for $\widehat X^*$. Therefore $\mathbb E [ | \widehat X^{*} _T|^2 ]$ has a finite limit for $T \to \infty$, hence the admissibility condition above is fulfilled.
This in turn implies that $v^*$ is an admissible control.

\subsection{The stationary distribution}\label{ssec:stationary}
In this part we compute $\Xx^*:=( X^*, \widehat{X}^*)^\top$'s stationary law, that  is  $\lim_{T\to \infty} \mathcal L(\Xx^*_T)$. Possibly extending the initial probability space,  we define  a random variable $\Xx^*_\infty$, such that  $\lim_{T\to \infty} \mathcal L(\Xx^*_T)=\mathcal L(\Xx^*_\infty)$.

By plugging the expression of the optimal control $v^*$ in the dynamics of $\mathcal X^*$ we obtain
\begin{align}
    \D \Xx^*_t
    = \Bigg[ 
    \begin{pmatrix}
        A_{\rm x} & - B_{\rm x} C_2 ^{-1} B_{\rm x} ^\top G_2\\
        \bold{0} & A_{\rm x} -  B_{\rm x} C_2 ^{-1} B_{\rm x} ^\top G_2
    \end{pmatrix}
    \Xx^*_t +
    \begin{pmatrix}
        -\frac{1}{2} B_{\rm x} C_2 ^{-1} (B_{\rm x} ^\top G_1 + C_1) + c_{\rm x}\\
        -\frac{1}{2} B_{\rm x} C_2 ^{-1} (B_{\rm x} ^\top G_1 + C_1) + c_{\rm x} 
    \end{pmatrix}
    \Bigg] \D t +
    \begin{pmatrix}
        \D W _t\\
        P_b(t)b^\top \D I_t 
    \end{pmatrix}
\end{align}
where we recall that $\D I_t = \D M_t - b\widehat{X}^* _t \D t = b(X^*_t - \widehat{X}_t^*)\D t + \D B_t$, with $B$ and $W$ independent Brownian motions. Therefore, we rewrite the states' dynamics as 
\begin{equation}
    \D \Xx^*_t
    = \left( \Theta(t)
    \Xx^*_t +
   \vartheta
    \right) \D t +
\Xi(t)  (
        \D W_t ,
       \D B_t )^\top , \quad \mathcal X^* _0 =(x_0,x_0) ^\top,
\end{equation}
where
\begin{align*}  \Theta(t) & :=
\begin{pmatrix}
        A_{\rm x} & - B_{\rm x} C_2^{-1} B_{\rm x} ^\top G_2\\
        P_b(t) b^\top b & A_{\rm x} - B_{\rm x} C_2^{-1} B_{\rm x} ^\top G_2 - 
            P_b(t)b^\top b
    \end{pmatrix},\\
     \vartheta & := \begin{pmatrix}
     \theta\\
     \theta
     \end{pmatrix}
     :=
    \begin{pmatrix}
        -\frac{1}{2} B_{\rm x} C_2^{-1} (B_{\rm x} ^\top G_1 +C_1) +c_{\rm x}\\
        -\frac{1}{2} B_{\rm x} C_2^{-1} (B_{\rm x} ^\top G_1 +C_1) +c_{\rm x}
    \end{pmatrix},
    \end{align*}
    and
    \[ \Xi(t) : =
    \begin{pmatrix}
        \mathbf 1 & \mathbf 0\\
        \mathbf 0 & P_b(t)b^\top 
    \end{pmatrix},
    \]
 which is a square matrix with dimension $d_W + d_B$.
This is a system of linear SDEs with deterministic coefficients, hence the stationary distribution (when it exists) is Gaussian. To characterise it, it suffices to compute mean and covariance matrix for any fixed $t$ and pass to the limit. 
Let us now set the following notations (recall that $\EE[X^*_t] = \EE[\widehat X^*_t], t \ge 0$)
\begin{align}\label{Eq:notations_ODEs}
     m_t & :=   \EE[X^*_t] \in \RR^{d_W}, \\
     w_t & := \textrm{Cov}(\Xx^*_t) = \EE[(\Xx^*_t- \EE(\mathcal X_t^*))(\Xx^*_t-\EE(\mathcal X_t^*))^\top] \in \mathcal S_{2 d_W}(\RR).
\end{align}
By \cite[Chapter 5, Problem 6.1]{karatzas14}, we have that $m_t$ and $w_t$ solve the following ODEs
\begin{align}\label{eq:ODE_m}
    \D m_t & = 
    \left(
   \Theta_1  m_t + \theta
    \right) \D t , \quad m_0 = x_0^\top \\ \label{eq:ODE_w}
    \D w_t & = ( w_t \Theta(t)^\top + \Theta(t) w_t +\Xi(t)\Xi(t)^\top)  \D t ,  \quad w_0 = \mathbf 0,
\end{align}
where the symmetric matrix $\Theta_1 := A_{\rm x} - B_{\rm x} C_2^{-1} B_{\rm x} ^\top G_2 \in \mathcal S_{d_W}(\RR)$ already appeared in the solution $G_1$ in Equation \eqref{eq:G1}.
We need to check under which assumptions on the coefficients the solutions $m_t$ and $w_t$ the linear ODEs above have limits $m_\infty,w_\infty$ for $t \to +\infty$.

\paragraph{The ODE for the mean.}
Under Assumption \ref{ass_Theta1}, the ODE \eqref{eq:ODE_m} can be transformed by defining $\bar m_t= m_t + \Theta_1^{-1} \theta$, which solves: $d\bar m_t= \Theta_1 \bar m_t \D t$.
Hence, $m_t \to m_\infty$, as $t \to +\infty$, if and only if $\bar m_t \to \mathbf 0$, as $t \to +\infty$, where $m_\infty = \Theta_1^{-1} \theta$. It is well known that this happens (namely the equilibrium point $\bar m = \mathbf 0$ is globally asymptotically stable) if and only if all the eigenvalues of $\Theta_1$ have strictly negative real parts (see, e.g., \cite[Theorem 3.5]{Khalil96}).

\paragraph{The ODE for the covariance.}
The ODE \eqref{eq:ODE_w} is a special case of a matrix Riccati equation (such as Equation \eqref{eq:ODE_P}), without the quadratic term. In order to apply Theorem 2.1 in \cite{Wohnam68}  we set the following: 
\begin{assumption}\label{ass_stab_cov}
	$\Theta$ is stable and $(\Xi, \Theta^\top )$ is observable. 
\end{assumption}
An application of \cite[Theorem 2.1]{Wohnam68} ensures that there exists a positive definite matrix $w_\infty = \lim_{t \rightarrow +\infty} w_t$, which is the unique positive semi-definite solution of the ARE: 
\begin{equation}\label{eq:Lyapunov}
w_\infty \Theta_\infty ^\top + \Theta_\infty w_\infty +\Xi_\infty \Xi_\infty^\top =\bold{0}.
\end{equation}
\begin{remark}
	The above Equation \eqref{eq:Lyapunov} is a (continuous time) Lyapunov equation (see, e.g., \cite[Section 3.3., page 123]{Khalil96}) and, when $\Theta_\infty$ is Hurwitz stable, it can be solved explicitly as
	\[  w^* _\infty = \int_0 ^\infty e^{\Theta_\infty t} \Xi_\infty \Xi_\infty ^\top e^{\Theta_\infty ^\top t} \D t. \]
\end{remark}

\subsection{The Receivers' ergodic mean-field game}\label{ssec:MFG_erg}

Let us now consider a mean-field game (MFG) generalisation of the initial one-Receiver problem, as introduced at the beginning of Section \ref{sec:theory}, to address a case where we have a large population of identical Receivers getting information from the same Sender. MFGs have been pioneered by \cite{lasrylions07} and \cite{huang06} as ``natural'' limit for large population stochastic differential games where the players are identical and they are linked by a mean-field interaction. In case of an ergodic criterion for all players, the latter means that each player interacts with the long term empirical distribution of the population's states and (possibly) actions. Therefore, when passing to the limit with respect to the number of players, it suffices to study the problem of a representative player interacting with a distribution over the state space, which at equilibrium is equal to the stationary distribution of the representative player's state. When the objective functional is linear-quadratic as it is the case here, the distribution over the state space appears along the resolution process only through its stationary mean and (possibly) covariance. We refer to the two-volume book \cite{carmona-delarue-book} for a full probabilistic treatment of MFGs and for a wide range of applications. Finally, among many papers on MFG with ergodic criterion, we refer to \cite{bardi13} and \cite{bardi14} for the analytic approach in the linear-quadratic case and to \cite{cohen23}, \cite{cohen24} and \cite{ferrari23} for the probabilistic approach.\smallskip

We shortly describe the MFG analogue of the Receiver problem we solved above. The state variables have the following dynamics:
\begin{align}\label{eq:SDEs-MFG}
\D X_t &= (A_{\rm x} X_t + B_{\rm x} v_t + c_{\rm x}(m,w))\D t +  \D W_t , \qquad X_0= x_0,\\ \nonumber
\D M_t &= b X_t \D t +  \D B_t, \qquad M_0=0,
\end{align}
where $A_{\rm x}, B_{\rm x}$ are as in the one-receiver setting and $c_{\rm x}:\mathbb R^{d_W} \times \mathcal S_{d_W} \to \RR^{d_W}$ measurable.
Let us notice that $m$ and $w$ are set to be equal to the mean and the covariance, respectively, of the stationary distribution of the state process $X$ at equilibrium.

The representative Receiver's filtration is still given by $\mathbb  F^{\rm R}$ defined as before.
Furthermore, for a fixed $(m,w)$, the Receiver's objective functional is given by
\begin{align*}
 J^{\rm R}(v; b, m,w) 
 := & \limsup_{T \to \infty}\frac{1}{T}\EE \left[ \int_0^T f(X_t,v_t, m ,w) \D t \right],
\end{align*}
with
$$
f(x,v, m) = x^\top F_2 (m,w) x +F_1 (m,w) ^\top x +v^\top C_2 v + C_1 ^\top v , \quad (x,v) \in \RR^{d_W + r},
$$
where $C_1, C_2$ are as in the one-receiver setting, whereas $F_1: \RR^{d_W} \times \mathcal S_{d_W} \to \RR^{d_W}$ and $F_2: \RR^{d_W} \times \mathcal S_{d_W}  \to \RR^{d_W \times d_W}$ are given measurable functions. We will see specific examples for the functions $c_{\rm x}, F_1,F_2$ in the applications. All assumptions of the one-Receiver case are in force (for all $(m,w) \in \RR^{d_W} \times \mathcal S_{d_W} $), so that all results from the previous sections can be used in this part as well.

The representative Receiver wishes to minimise his objective functional over the set $\mathcal A^{\rm R}$ of all admissible controls defined as in the one-receiver framework.

In this setting, a MFG Nash equilibrium is any triple $(v^*,m^*, w^*) \in \mathcal A^{\rm R} \times \mathbb R^{d_W} \times \mathcal S_{d_W} $ such that:
\begin{itemize} 
\item[(i)] (optimality) $v^*$ is optimal given $(m^*,w^*)$;
\item[(ii)] (fixed point) $m^* = \mathbb E[X_\infty ^*]$ and $w^* = \text{Cov}(X_\infty ^*)$, where $X_\infty ^*$ is a random variable following the stationary distribution of the state $X^*$ (provided such distribution exists). 
\end{itemize} 
We also introduce the notation $m^*_\infty := \EE[X^*_\infty]$ and $w^*_\infty := \text{Cov}[X_\infty ^*]$.
For the optimality condition (i), let the parameter $(m,w) \in \RR^{d_W} \times \mathcal S_{d_W} $ be fixed. The results of the previous section give immediately the optimal strategy of the representative Receiver $v^* (m,w)$ and the stationary mean of the state process $X^*$ as functions of $(m,w)$:
\begin{align}
	v_t ^* (m,w) & =  - \frac{1}{2}C_2 ^{-1}(B_{\rm x} ^\top(2 G_2 (m,w) \widehat X_t ^* + G_1 (m,w)) + C_1) \\ 
	m_\infty ^{*\top}(m,w)  & = -\Theta_1(m,w) ^{-1} \theta(m,w) \nonumber\\
	& = -(A_{\rm x} - B_{\rm x} C_2^{-1} B_{\rm x} ^\top G_2(m,w)) ^{-1}\left(-\frac{1}{2} B_{\rm x} C_2^{-1} (B_{\rm x} ^\top G_1(m,w) +C_1) +c_{\rm x}(m,w)\right) ,
\end{align}
whereas the stationary covariance $w_\infty ^*(m,w)$ is given as the unique solution of the Lyapunov equation
\begin{align}
	w_\infty \Theta_\infty (m,w) ^\top + \Theta_\infty (m,w) w_\infty +\Xi_\infty (m,w)  \Xi_\infty^\top (m,w) =\bold{0},
\end{align}
where $G_1(m,w),G_2(m,w), \Theta_1(m,w), \Theta_\infty (m,w), \Xi_\infty (m,w)$ and $\theta(m,w)$ are defined as in previous section, except that here we have the extra dependence of these coefficients on $m$ and $w$ due to the presence in the systems of $F_1(m,w), F_2(m,w)$ and $c_{\rm x}(m,w)$, which depend on $(m,w)$ by assumption.

Finally, for the fixed point condition (ii) to hold at the equilibrium we impose 
\[ m^* _\infty (m,w) = m, \quad w^*_\infty (m,w) = w,\]
leading, in particular, to the extra equation
\begin{equation}\label{eq:fixed-pt}
	m^\top =  -(A_{\rm x} - B_{\rm x} C_2^{-1} B_{\rm x} ^\top G_2(m,w)) ^{-1}\left(-\frac{1}{2} B_{\rm x} C_2^{-1} (B_{\rm x} ^\top G_1(m,w) +C_1) +c_{\rm x}(m,w)\right).
\end{equation}
To conclude, assume that there exists a solution $(G_1^*, G_2^*, m^*, w^*)$ to the system
\begin{equation}
\label{eq:sys_G_i_m}
\left\{
\begin{array}{rcl}
A_{\rm x}^\top G_2 + G_2 A_{\rm x} - G_2 B_{\rm x} C_2 ^{-1} B_{\rm x} ^\top G_2 + F_2 (m,w) & = & 0 \vspace{0.2cm} \\
(\frac{1}{2}G_2 B_{\rm x} C_2^{-1} B_{\rm x} ^\top + A_{\rm x} ^\top )G_1 + 2G_2 c_{\rm x} - G_2 B_{\rm x} C_2 ^{-1} C_1 + F_1 (m,w)  &= & 0 \vspace{0.2cm}\\
m^\top + (A_{\rm x} - B_{\rm x} C_2^{-1} B_{\rm x} ^\top G_2 ) ^{-1}\left(-\frac{1}{2} B_{\rm x} C_2^{-1} (B_{\rm x} ^\top G_1 +C_1) +c_{\rm x}(m,w)\right) &= & 0 \vspace{0.2cm} \\
w \Theta_\infty ^\top + \Theta_\infty  w +\Xi_\infty \Xi_\infty^\top  & =& \bold{0}
\end{array}
\right.
\end{equation}
then the pair $(v^*,m^*)$ where $v_t ^* := v_t ^*(m^*, w^*)$, for all $t\geq 0$, is a MFG Nash equilibrium. The existence of a solution for the system above is not trivial due to the dependence on $(m,w)$ in some of the coefficients, namely $F_1,F_2,c_{\rm x}$. So, instead of studying it in full generality by adding even more technical assumptions, we solve it only in the two applications we are interested in (see Section \ref{sec:apps-sol}).  

\subsection{The Sender's problem}\label{sec:Sender_erg}
After solving the one-receiver and the Receivers' MFG problems, we turn to the Sender optimisation problem, where, without loss of generality,  we assume that $\Sigma$ is the identity matrix (see Remark \ref{rem:sigma}) and denote by $h(b)$ the cost $h(b,\mathbf 1)$ with a slight abuse of notation. In this case, we recall that $\mathcal A^{\rm S}$ is a fixed subset of $\mathbb R^{d_W \times d_B}$ (e.g., in the applications $\mathcal A^{\rm S} = \RR_+$).

In this section, we explain the resolution method without going into much detail. Such a method is used to completely solve the two applications. Detailed computations are postponed to Section \ref{sec:apps-sol}.

Consider first the one-receiver case. The Receiver's optimal strategy $v^*_t$ is given by \eqref{eq:v*}, so that it can be expressed as a linear function of $\widehat X^* _t$, denoted $v^*(x)$. Observe that such a control $v^*$ depends on the Sender's control $b$ only indirectly through the (projected) state variable $\widehat X^*$, whose volatility coefficients is $P_b(t)b^\top$. 

Thanks to the existence of a stationary distribution for the pair of state variables $(X^*,\widehat X^*)$ established in Section \ref{ssec:stationary}, the mean ergodic theorem (see, e.g., \cite[Theorem 1.5.18]{araposthatis11}) implies that, when the Sender's cost function $g$ satisfies the integrability condition:
\begin{equation}\label{ass:g}
g(X_\infty ^*, v^* (\widehat X^* _\infty)) \in L^1 (\mathbb P),
\end{equation}
one has
\[  
	\lim_{T \to \infty} \frac{1}{T} \mathbb E\left[ \int_0 ^T g(X_t ^*, v^*(\widehat X_t ^*))\D t \right] = \mathbb E\left[g(X_\infty ^*, v^*(\widehat X_\infty ^*))\right].
\]
Thus, the Sender's optimisation problem \eqref{eq:JS} is actually equivalent to the following:
    \begin{align}\label{eq:JS2}
      \inf_{b \in \mathcal A^{\rm S}} \widetilde J^{\rm S} (b; v^*) := & \inf_{b \in \mathcal A^{\rm S}} \EE \left[g(X^*_{\infty},v^*(\widehat X^* _\infty))\right]+ h(b),
    \end{align}
    where the random variables $(X^*_{\infty},\widehat X^*_\infty)$ are defined on a suitable probability space such that their joint law
    \begin{align}
        \mathcal{L}(X^*_{\infty},\widehat{X}^*_\infty)
        =\lim_{t \to \infty}\mathcal{L}(X^*_t, \widehat{X}^*_t),
    \end{align}
    in the topology of weak convergence of measures. Therefore, the Sender's problem reduces to a static optimisation problem, whose objective depends only on the stationary distribution of state variables. 
    
    Clearly, the same equivalence result for the Sender also holds in the MFG case, except that now $v^*$, hence the Sender's value function $\tilde J(b;v^*)$, depends on $(m^*,w^*)$ as well. 
    
    Typical examples of the function $g$ satisfying the condition in Equation \eqref{ass:g} above are considered in the applications, where the Sender's problem is also solved and discussed in full detail.

\section{Applications results}\label{sec:apps-sol}
\subsection{The informative value of smart meters}\label{ssec:res-smart}

The parameter identification (example-general theory) and the verification of the standing assumptions in this case is provided in Appendix \ref{app_ex_1}. We start with the first example, introduced in Section \ref{sec:ex_elec}. 
So, direct application of the general case developed in Section \ref{ssec:Receiver_gen} provides the Receiver's optimal control (see Equation \eqref{eq:v*})
\begin{align}\label{opt_contr_energy}
&v^* _t = \kappa \big(\widehat X_t^* - \ell) - \beta \widehat X^\ast_t + \beta m_\infty, 
\quad \beta:= \sqrt{\kappa^2+ 2\gamma (p_1+u_0)} > \kappa, \\ 
& \text{where}  \quad m_\infty :=  \frac{\kappa^2 + 2\gamma u_0 }{\kappa^2+2 \gamma(u_0+p_1)}\Big[\ell - \frac{\gamma p_0}{\kappa^2+2\gamma u_0} \Big] < \ell,
\end{align}
and the dynamics of the state process $X^*$ and its posterior estimation $\widehat X^*$ by the Receiver are
\begin{align*}
&  \D X^\ast_t = -\kappa \Big[ X^\ast_t - \widehat X^\ast_t  + \frac\beta\kappa\big(\widehat X^\ast_t - m_\infty\big)\Big] \D t + \D W_t, \quad
 d\widehat X^\ast_t = -\beta\big[\widehat X^\ast_t - m_\infty \big] \D t + b P_{b} (t) dI_t,
\end{align*}
where we recall that the dynamics of the innovation process $dI_t$ is given by $dI_t = \D M_t - b\widehat X^* _t \D t =  b(X^*_t - \widehat X_t^*)\D t + \D B_t$,
with $B$ and $W$ independent Brownian motions. Recall that,  the volatility $\sigma$ of the information process $M$ has been  normalised to $1$. 
\medskip

The ``posterior estimation'' of the consumption, namely $\widehat X^\ast$, is an Ornstein-Uhlenbeck process with intensity of mean-reversion $\beta$ (larger than the uncontrolled speed of mean-reversion, $\kappa$) and long-term value $m_\infty$ (lower than the desired habit value of the consumer, $\ell$). The dynamics of the consumption $X^\ast$ can also be interpreted as a mean-reverting process but with a stochastic long-term value which is the posterior estimation minus a correction term.
Note that both processes share the same stationary value which is precisely $m_\infty$. 
This long-term value does not depend on the information provided by the smart meter device. It only takes into account the preferences of the consumer ($u_0$ and $\ell$), the cost  ($\gamma$), the price of electricity parameters ($p_0$ and $p_1$) and the speed of mean-reversion ($\kappa$). 
When the consumer's preferences are high or the prices are low, $m_\infty$ is close to $\ell$. 
In the opposite situations, the consumer reduces his average long-run consumption habit. 
In a full information situation, the  speed of mean reversion of the consumption process $X^*$ would be $\beta$.

Despite the fact that the information provision device precision, $b$, does not change the long-run average value of the consumption, nevertheless it changes its trajectories. Figure~\ref{fig:DR}~(b), (c) and (d) illustrates three trajectories of the processes $X^\ast$ and $\widehat X^\ast$ for three different values of the precision parameter $b$ but for the same realisation of the random shock process $W$. The figure makes clear that as the precision of the device increases, the posterior estimation $\widehat X ^\ast$ gets closer to the true consumption process $X^\ast$ but, at the same time, the process $X^\ast$ is itself affected by the control of the consumer. 

In this model, average consumption reduction is not achieve thanks to information provision but thanks to prices which operate as the efficient tools to align consumers' preferences with production costs. The fact that  in our model the average consumption is left unchanged by the information device is a desirable property. Indeed, it captures the idea that even though consumers may change their consumption rate at a given instant, rebound effects maintain their consumption  on average. In this sense, the communication device is not evaluated in this capacity to reduce the consumers' consumption, but as device to enhance flexibility of consumption by moving consumption rate across time.

\medskip

The stationary distribution for the equilibrium state $X^*$ is given by the Gaussian distribution $\mathcal{N}(m_\infty, \sigma_\infty^2(b))$ with
\begin{align}
 & \sigma_\infty^2(b) = \frac{1}{2\kappa} \Big[1-\frac{\beta-\kappa}{\beta}\Big(\sqrt{1 + \frac{\kappa^2}{b^2}} -  \frac{\kappa}{b}\Big)^2\Big].
\end{align}

The smart meter only influences the variance of the stationary distribution of the consumption through the precision $b$ of the device. The variance $\sigma_\infty^2$ is a non-increasing function of the precision $b$ with limit values: 
\begin{align}\label{eq:sigmalim}
	\lim_{b\searrow0}\sigma_\infty^2(b)=\sigma_\infty^2(0^+) = \frac{1}{2\kappa} =: \bar\sigma^2, 
	\quad 
	\lim_{b\to+\infty}\sigma_\infty^2(b)=\sigma_\infty^2(+\infty) =  \frac{1}{2\beta} =: \underline\sigma^2. 
\end{align}

Furthermore, after simplification, the Sender's problem in Equation \eqref{eq:JS2} can be reduced to
\begin{align}\label{eq:JSred}
 \inf_{b\geq 0} J^{\rm S} (b) := &  \inf_{b\geq 0}   \big(g_1 -  p_1\big) \sigma_{\infty}^2(b)  + h(b),
\end{align}
where $h$ is the cost function of the device and where we restricted $b$ to be positive, as only its square intervenes in the optimisation problem of the Sender. Thus, as already exhibited in \cite{Aid23}, it is necessary that  the price of electricity is not greater than the marginal cost of production for the Sender to have an incentive to reduce the volatility. Note that, in the case where $g'(x) = 2 g_1 x = 2 p_1 x$, i.e., the electricity is priced at exactly the marginal cost of production, there is no benefit in developing an information device such as smart meters. In our model, only imperfections in the transmission to the consumers of certain production costs justify the development of an information solution. From now on, we assume that $0< g_1-p_1$ $=$ $k_{\rm I}$, where $k_{\rm I}$ stands for the {\em cost pricing imperfection}.

\medskip

Moreover, one can be surprised that the criterion of the Sender makes use of the variance of the consumption itself and not of the estimation error between the consumption $X^*$ and its posterior estimate $\widehat X^*$. Nevertheless, it holds that the variance of error of estimation, namely $\mathbb E[(\widehat X^*_\infty - X_\infty^*)^2] = P_b(\infty)$ satisfies
\begin{align*}
P_{b}(\infty)  = \frac{1}{b^2}\big(\sqrt{ \kappa^2 + b^2 } -\kappa\big),
\end{align*}
which is also a non-increasing function of the precision of the device $b$. Thus, reducing the error of estimation of the true value of the state is equivalent to reducing the variance of the state process.

\medskip

Finding a closed-form expression of the optimisation problem of the Sender~\eqref{eq:JSred} can be cumbersome depending on the form of the function $\sigma^2_\infty(b)$. Nevertheless, it is possible to reformulate the problem of the Sender using a cost function of the variance reduction, as in
\begin{align*}
 \inf_{z \in (\underline\sigma^2, \bar\sigma^2]}    \frac12 k_{\rm I} z  + H(z), 
\end{align*}
where $z := \sigma^2_\infty(b)$ and $H$ is such that $H(\bar\sigma^2) =0$ and $H(\underline\sigma^2) = +\infty$. This equivalent representation expresses the minimisation problem of the Sender directly using the cost of reducing the variance of the consumption. Thus, it is immediate that the optimal variance reduction should satisfy 
\begin{align*}
H'(z) = - \frac12 k_{\rm I},
\end{align*}
namely marginal cost equates marginal benefit which is constant here and equal to the cost pricing imperfection. Assume now the following form for the function $H$
\begin{align*}
H(z) = 
\frac{r \eta}{2}  \Big(\frac{\Delta\sigma^2}{z - \underline\sigma^2}-1\Big), \quad z \in (\underline\sigma^2, \bar\sigma^2], \quad \Delta\sigma^2 := \bar\sigma^2 - \underline\sigma^2,
\end{align*}
with $\eta>0$ a parameter of the cost function and $r$ the discount rate.  Then, it holds that
\begin{align*}
(\sigma^\ast)^2 := 
	\begin{cases}
	\bar\sigma^2 & k_{\rm I} \Delta\sigma^2 \leq  r \eta , \\
	& \\
	\underline\sigma^2 + \sqrt{\frac{r\eta \Delta\sigma^2}{k_{\rm I}}}, &   r \eta < k_{\rm I} \Delta\sigma^2.
	\end{cases}
\end{align*}
Figure~\ref{fig:DR}~(a) provides a representation of $\sigma^2_\infty(b)$ together with the optimal volatility value $\sigma^\ast = \sigma_\infty(b^\ast)$. The product $k_{\rm I} \Delta\sigma^2$ captures two relevant terms in the decision process of the Sender: the higher it is, the higher the value of an information device. This quantity can be large either because of a significant imperfection of pricing cost $k_{\rm I}$ or because the potential variance reduction $\Delta\sigma^2$ is large.
 
 \medskip

We turn now to the case of a mean-field of consumers in interaction through the electricity price. For a given value of $m$, the optimal control of the consumer is given by
\begin{align}\label{opt_contr_energy}
v^\ast _t &= \kappa \big(\widehat X_t^\ast  -\ell\big) -\beta \widehat X^\ast_t + 
\beta \hat \ell(m), \quad
\beta:= \sqrt{\kappa^2 + \gamma u_0}, \\
\text{where} \quad \widehat \ell(m) & := \frac{1}{\beta^2}\big[ (\kappa^2 + 2 \gamma u_0)  \ell   -\gamma p(m) \big],
\end{align}
and the dynamics of the consumption process and its posterior estimation are
\begin{align*}
& \D X^\ast_t = -\kappa \Big[ X^*_t -\widehat X^\ast_t + \frac{\beta}{\kappa}\big(\widehat X^\ast_t - \hat \ell(m) \big) \Big] \D t + \D W_t, \quad
 d\widehat X^\ast_t =   -\beta \big[\widehat X^*_t  -  \hat \ell(m)\big] \D t +  b P_{ b }(t) dI_t.
\end{align*}

The mean-field equilibrium is given by the fixed point condition $m = \hat \ell(m)$, yielding the equilibrium long-term value $m^\ast_\infty$ given by
\begin{align}
m^\ast_{\infty} = 
\frac{(\kappa^2  + 2\gamma u_0)  \ell - \gamma p_0}{\beta^2 + p_1\gamma} = 
\frac{(\kappa^2  + 2\gamma u_0)  \ell - \gamma p_0}{\kappa^2 + \gamma(p_1+u_0)} < m_\infty.
\end{align}
At equilibrium, the long-run consumption of a  crowd of undistinguishable consumers is lower than the one of a representative agent. But, this result is an effect of the mean-field interaction not an effect of information provision.  Furthermore, at equilibrium, the stationary distribution of the process $X^*$ is the Gaussian distribution $\mathcal{N}(m^\ast_\infty, (\sigma^\ast_\infty)^2(b))$
with
\begin{align}
\sigma^\ast_\infty(b) = \sigma_\infty(b).
\end{align}
But, in the case of a mean-field of consumers, the objective function of the Sender reduces to
\begin{align}\label{eq:JSmfgred}
 \inf_{b\geq 0} J^{\rm S} (b,\sigma) := &     g_1 \sigma_{\infty}^2(b)  + h(b),
\end{align}
because the income term $p(m^\ast_\infty)m^\ast_\infty$ does not depend on $b$. Hence, the cost pricing imperfection increases from $k_{\rm I} = g_1 - p_1$ to a strictly greater value $k_{\rm I}^{\rm mf} := g_1$. Appropriate volatility reduction follows with a higher threshold.

\medskip

Since our results are in closed-form, we may even risk ourselves in a back of the envelop real-life economic estimation of the interest in the current policy to develop smart meters for consumer's information purposes. Based on the former results in \cite[Sect. 6]{Aid22}, which performs volatility estimation of UK consumers thanks to the public data made available by the Low Carbon London project, we can take an initial volatility $\sigma = 85$~W.h$^{-\frac12}$. Using monetary incentive mechanisms, they show that this volatility can be divided by two. Thus, assume we take $\bar\sigma = 85$~W$^2$.h$^{-1}$ and $\underline\sigma = 44$~W$^2$.h$^{-1}$. For the cost pricing imperfection in the mean-field case, we can roughly take also the value picked in the same reference (sec.~A.6.6) of $g_1 = 40$~\euro.h.MW$^{-2}$ $=4\,10^{-5}$~\euro.h.W$^{-2}$. Finally to calibrate $\eta$ we assume that the current European policy that leads a spending of approximately 150~\euro~per smart meter corresponds to a division by two of the consumer's volatility. Thus we get that $\eta = 2 \times 150$~\euro. Taking a discount rate for public investment of $r =4$\%, we get $r \eta = 12$ whereas $k_{\rm I}^{\rm mf}\Delta\sigma^2 = 4\,10^{-5} \times 5290 = 0.2$ $<$ $r\eta$. We lack one or two orders of magnitude to justify the benefit of smart meters for the sole purpose of information provision to consumers. 

\begin{figure}[tbh!]
\begin{center}
\begin{tabular}{c c}
(a) & (b)   \\
\includegraphics[width=0.45\textwidth]{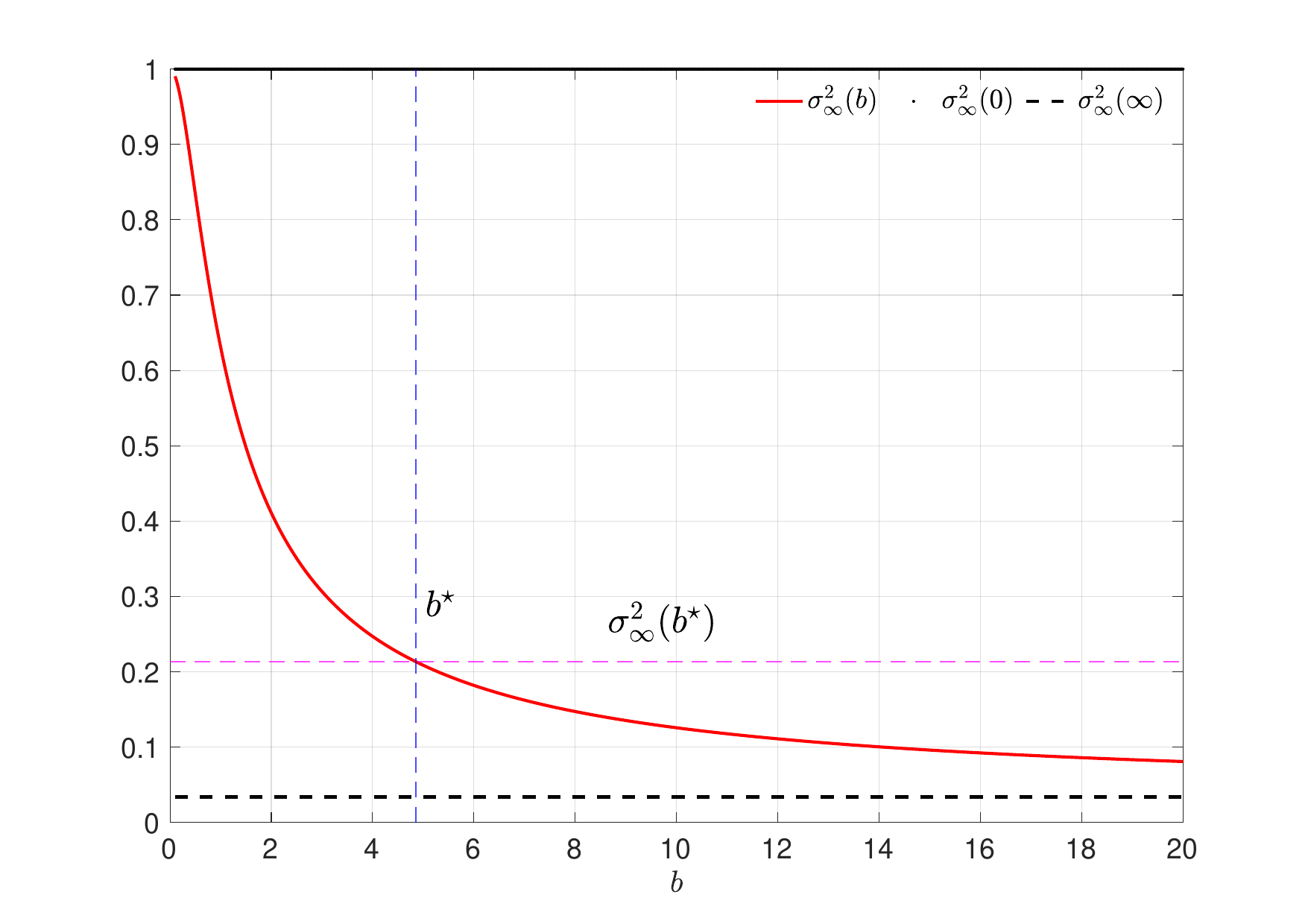} &
\includegraphics[width=0.45\textwidth]{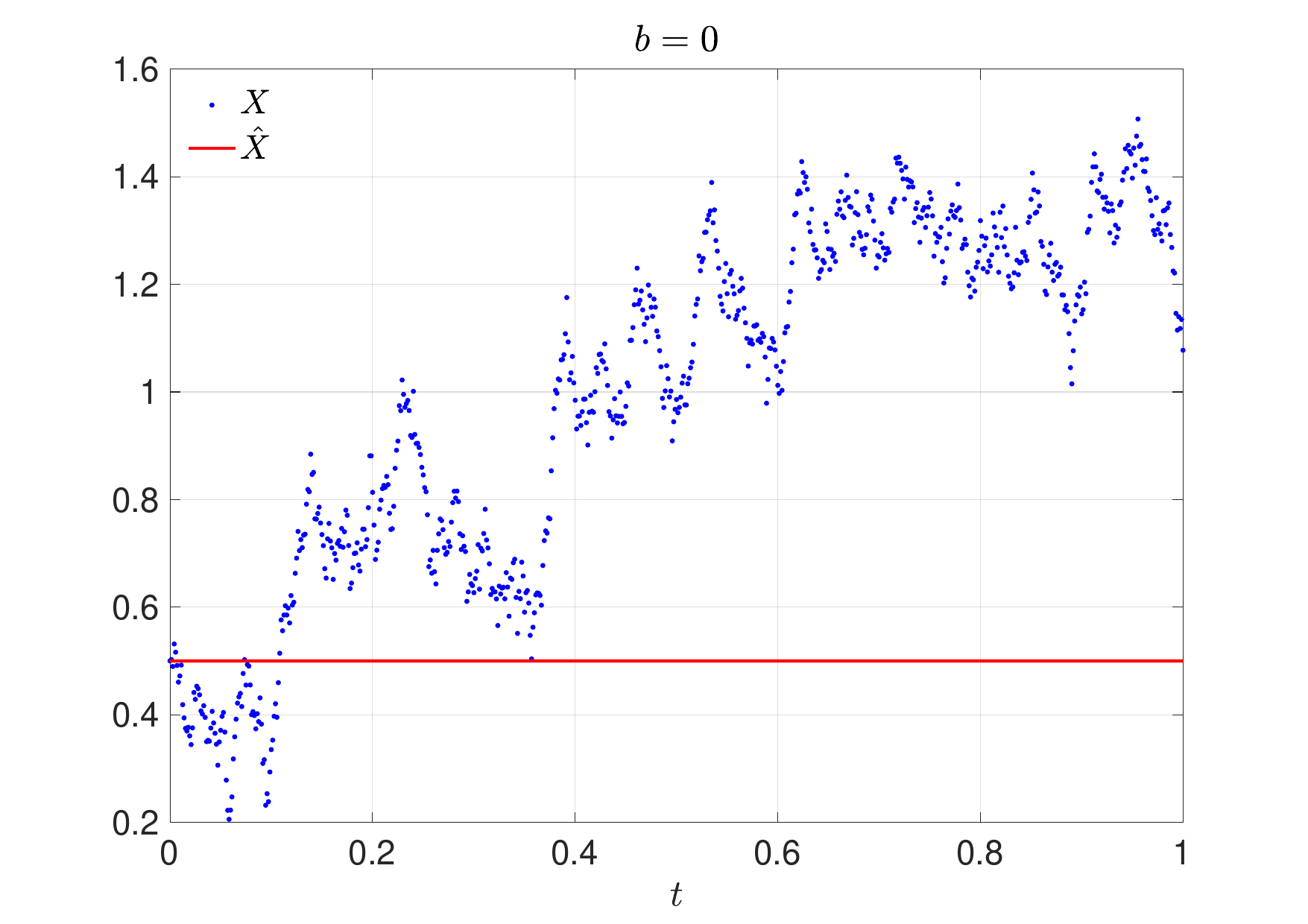} \\
(c) & (d)   \\
\includegraphics[width=0.45\textwidth]{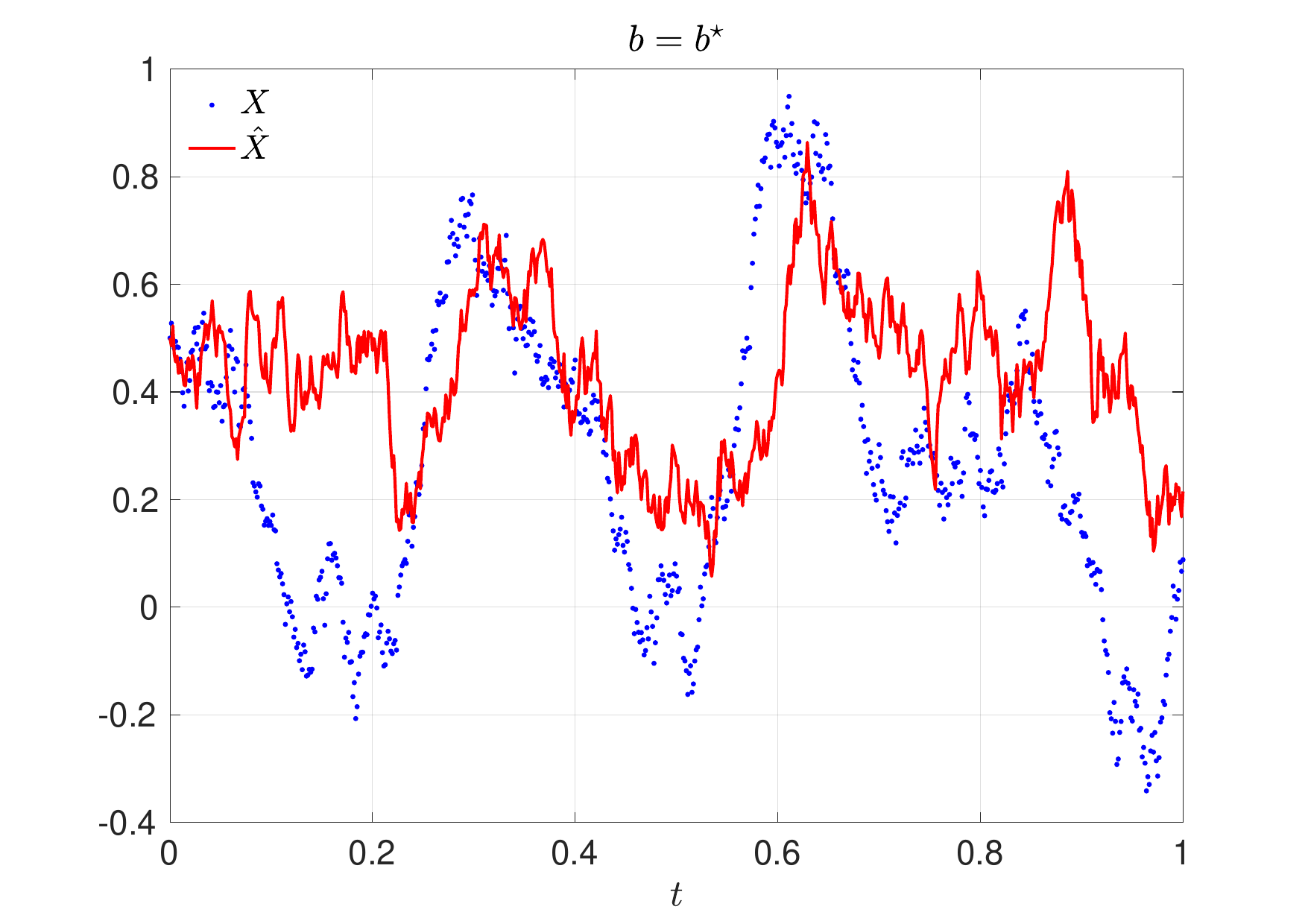}  &
\includegraphics[width=0.45\textwidth]{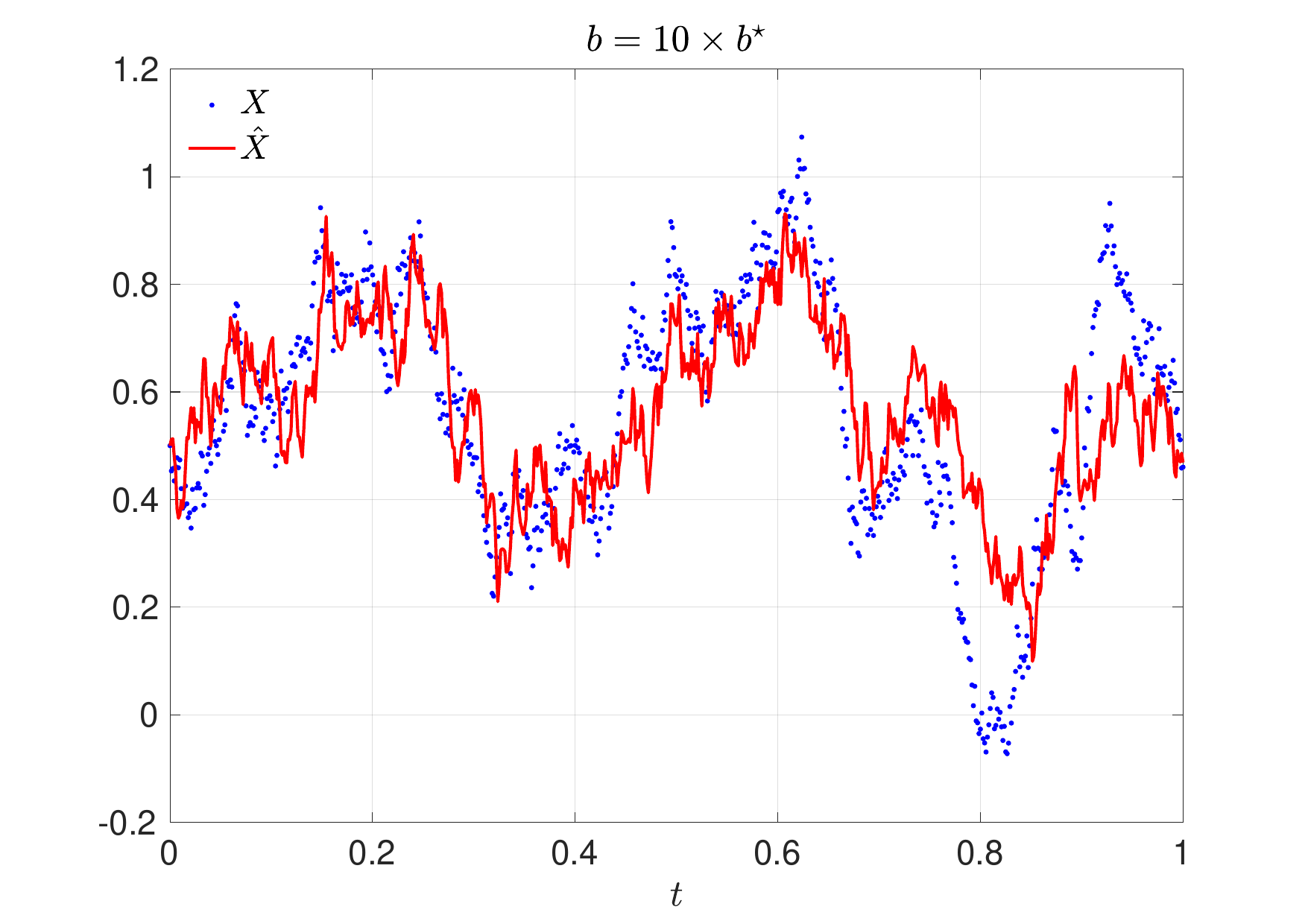} \\
\end{tabular}
\caption{ {\small  (a) The variance $\sigma^2_\infty(b)$ of the stationary law of the consumption process $X$. Three trajectories of the consumption process $X$ and its posterior estimation $\widehat X$ for (b) $b=0$ (c) $b= 5.5$ (d) $b = 55$. Parameters: $\kappa=0.5$, $\ell=1$, $u_0=100$, $\gamma = 1$, $p_0=50$, $p_1=100$; $g_1=400$, $h=0.5$.}}
\label{fig:DR}
\end{center}
\end{figure}

\subsection{Carbon footprint accounting rules}\label{ssec:res-carbon}
We focus here on the second example, introduced in Section \ref{sec:carbon}. The parameter identification (example-general theory) and the verification of the standing assumptions in this case is provided in Appendix \ref{app_ex_2}.
A direct application of the results of the general case leads to the expression of the optimal control of the representative firm:

\hs

For a given value of $b$ and a given value of $q$, the optimal dynamics of the state $X ^\ast$ and its estimate $\widehat X^\ast$ are given by
\begin{align*}
&\D \widehat X_t   = - \beta \Big(\widehat X_t -  \hat\ell(q)\Big) \D t + b P_b(t) \D I_t, \quad
\D X_t  = \big[ \kappa(\widehat X_t - X_t) + \beta(\hat\ell(q) - \hat X_t)\big] \D t + \D W_t
\end{align*}
where 
\begin{align*}  
&\hat \ell(q) :=  \frac{\ell \kappa^2 + 2\gamma \bar q}{\kappa^2 + 2\gamma\bar\lambda},\quad 
    \bar\lambda := \lambda_a + \lambda_q, \quad
    \bar q := \lambda_a a + \lambda_q q, \quad
\beta := \sqrt{\kappa^2 + 2 \gamma \bar \lambda}
\end{align*}
and the optimal control is given by
  \begin{align}\label{opt_contr_energy}
&v^\ast_t =   \kappa (\widehat X_t^*- \ell) - \beta \widehat X^\ast_t + \beta \hat \ell(q).
\end{align}

 The fixed point condition on the value of the stationary target $q$ is given by $q = \hat \ell(q) - \epsilon \sigma_\infty(b)$ where the stationary variance of the carbon footprint process $X^\ast$ is given by 
\begin{align}
 & \sigma_\infty^2(b) = \frac{1}{2\kappa} \Big[1-\frac{\beta-\kappa}{\beta}\Big(\sqrt{1 + \frac{\kappa^2}{b^2}} -  \frac{\kappa}{b}\Big)^2\Big],
\end{align}
which is the same as in the smart meters application of section~\ref{ssec:res-smart}. This last function does not depend on $q$. 
Hence, we find that the equilibrium value $q_\infty^\ast(b)$ is given by
\begin{align}
q_\infty^\ast(b) := \frac{\ell\kappa^2 + 2\gamma a \lambda_a}{\kappa^2 + 2\gamma  \lambda_a}
- \epsilon \frac{\kappa^2 + 2\gamma\bar\lambda}{\kappa^2 + 2\gamma \lambda_a} \sigma_\infty(b)\end{align}
Thus, the stationary mean $m^\ast_\infty(b)$ of the process $X$ is given by $m^\ast_\infty(b) = \hat \ell(q^\ast_\infty(b)) = q^\ast_\infty(b) + \epsilon \sigma_\infty(b)$, and so
\begin{align}
	m^\ast_\infty(b)  = \frac{\kappa^2\ell + 2\gamma a \lambda_a}{\kappa^2 + 2\gamma \lambda_a } - \epsilon  \frac{2\gamma\lambda_q}{\kappa^2 + 2\gamma \lambda_a} \sigma_\infty(b)
	=: m_0 - \epsilon m_1 \sigma_\infty(b).
\end{align}

Noticing that $\mathbb E[(X^\ast_\infty)^2]= (m_\infty^\ast)^2 + \sigma_\infty^2$, the optimisation problem of the Sender can be reformulated as
\begin{align*}
	\inf_{z \in (\underline\sigma, \bar\sigma]} G(z) + \frac 12 c \big[ (m_0 - \epsilon m_1 z)^2 + z^2\big],
\end{align*}
where we exploit the notation $z := \sigma_\infty(b)$, $\underline\sigma$ and $\overline\sigma$ have been defined in~\eqref{eq:sigmalim}, and where the function $G$ measures the cost to reduce the standard deviation of the stationary law of $X$. A possible form that leads to closed-form expression for the optimal reduction is
\begin{align*}
G(z) = -  \frac{\eta}{\Delta \sigma} \ln\frac{z - \underline\sigma}{\Delta\sigma}, \quad z \in (\underline\sigma, \bar\sigma], \quad \Delta\sigma = \overline\sigma - \underline\sigma,
\end{align*}
where $\eta$ is a nonnegative parameter. This form exhibits the same type of properties as the cost function~$H$ used in the smart meters application of section~\ref{ssec:res-smart}. It is a nonincreasing, strictly convex function that satisfies $G(\overline\sigma) = 0$ and $\lim_{z\to\underline\sigma} G(z) = + \infty$. As in the smart meters application, it ensures that it is infinitely costly to reach perfect truth.

For an interior solution to exist, it is necessary and sufficient that the marginal benefit exceeds the marginal cost at $z =\bar\sigma$, i.e.,
\begin{align*}
 \eta  \leq c (\Delta\sigma)^2 \big[  (1 + \epsilon^2 m_1^2) \bar\sigma -\epsilon m_1m_0\big].
\end{align*}
If this condition is not satisfied, it is optimal to set the standard deviation to its level $\bar\sigma$, i.e., to provide no information. In the other case, the interior solution is given by
\begin{align}\label{eq:sigmaCF}
\sigma^\ast_\infty := \frac12 \Bigg[ 
 \sqrt{ \Big(\frac{\epsilon m_0 m_1}{1+\epsilon^2 m_1^2} + \underline\sigma\Big)^2 + 4 \frac{\delta - \epsilon   m_0 m_1 \underline\sigma}{1+\epsilon^2 m_1^2}} + \frac{\epsilon m_0 m_1}{1+\epsilon^2 m_1^2} + \underline\sigma  \Bigg], \quad
\text{with} \quad
\delta := \frac{\eta}{c \Delta\sigma}.
\end{align}

\begin{figure}[tbh!]
\begin{center}
\begin{tabular}{c c}
(a) & (b)   \\
\includegraphics[width=0.45\textwidth]{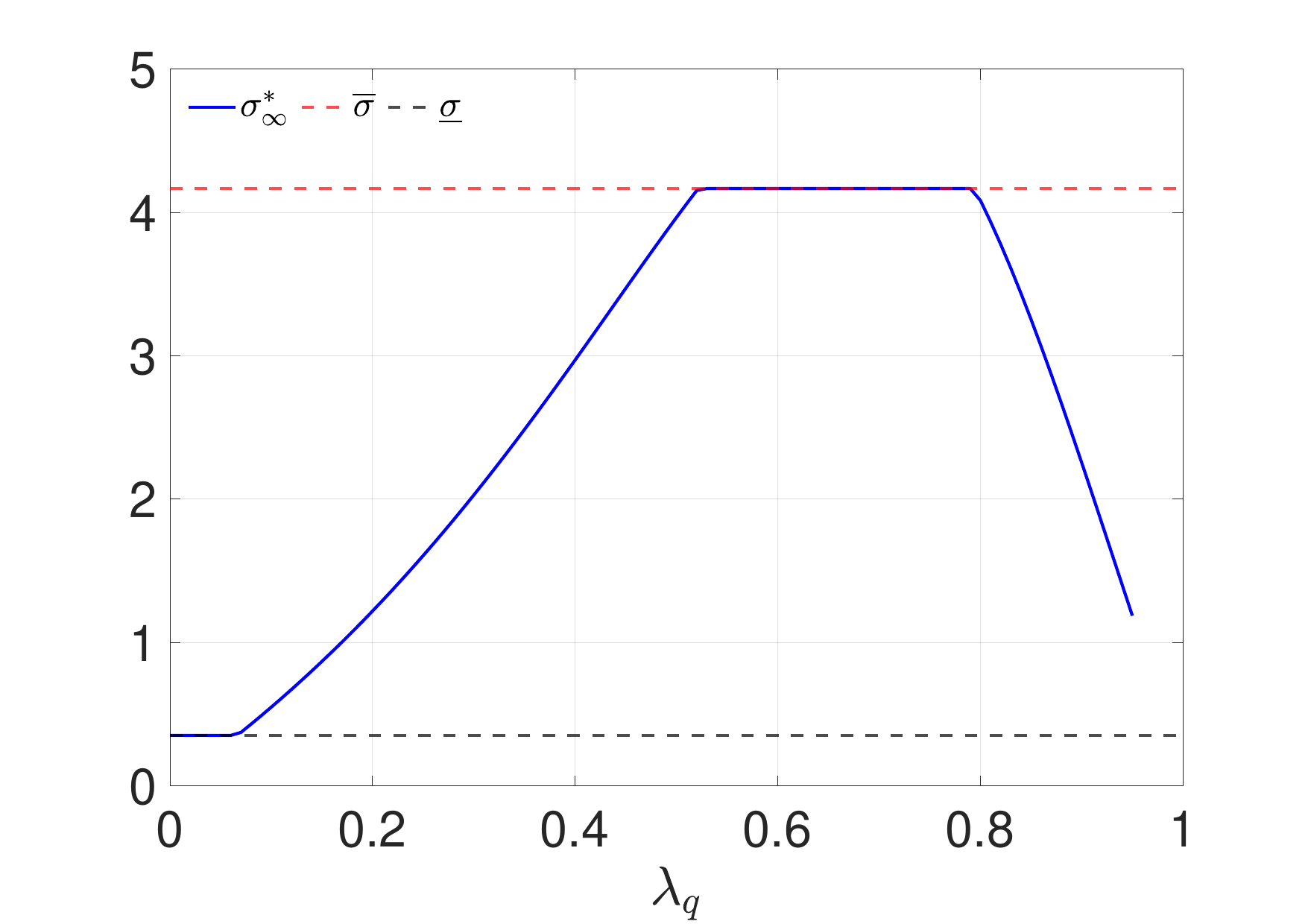} &
\includegraphics[width=0.45\textwidth]{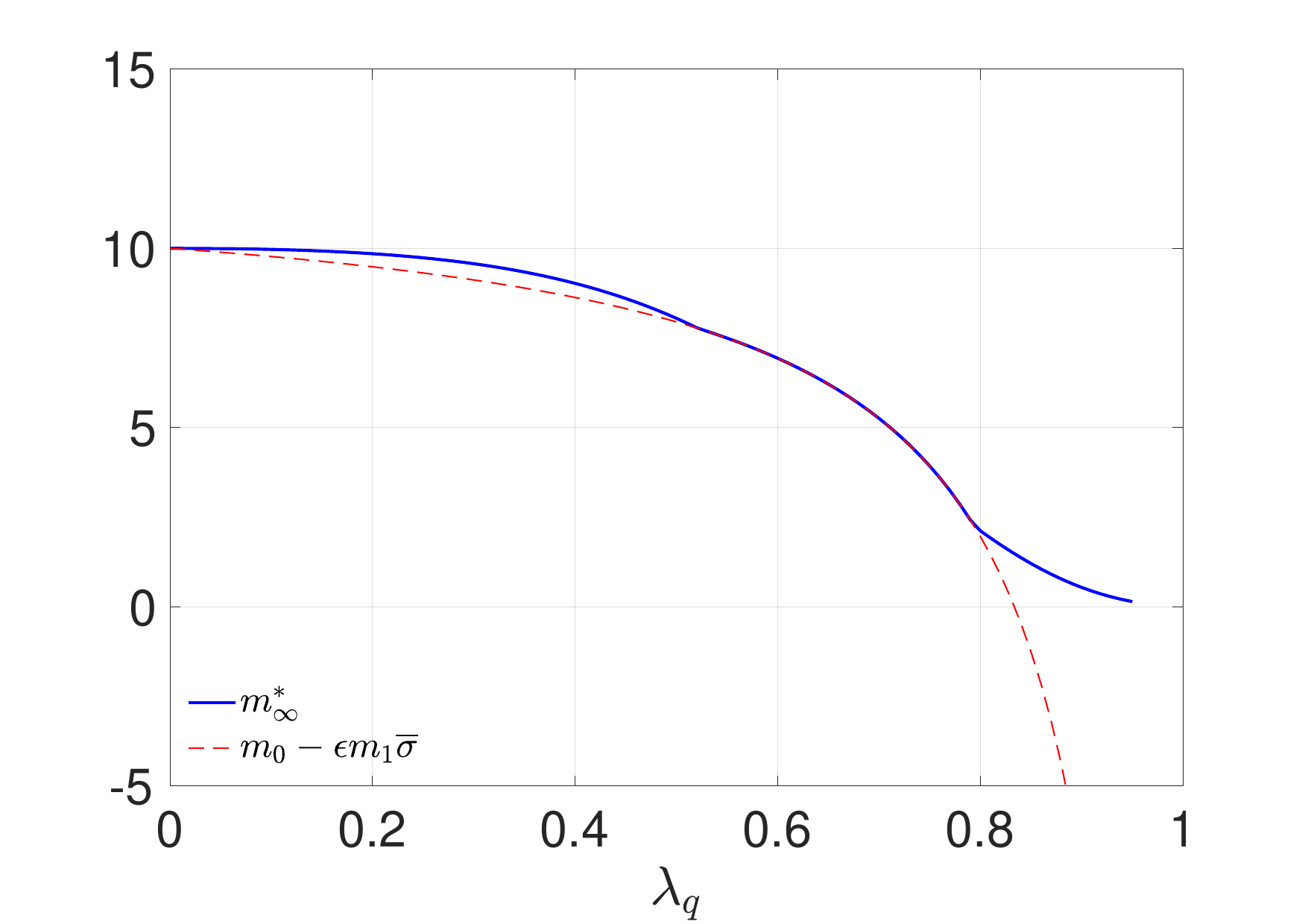} \\
(c) & (d)  \\
\includegraphics[width=0.45\textwidth]{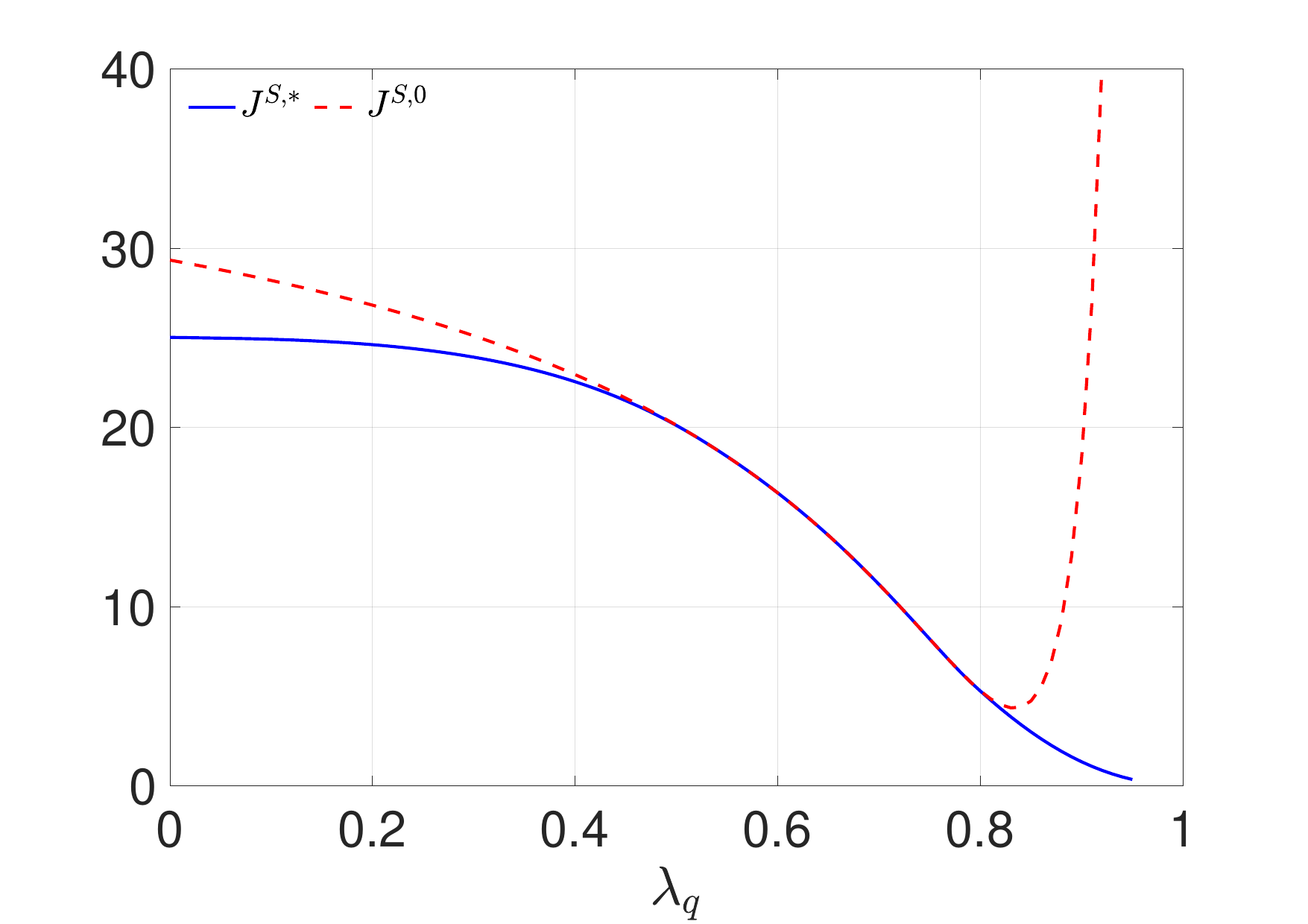} &
\includegraphics[width=0.45\textwidth]{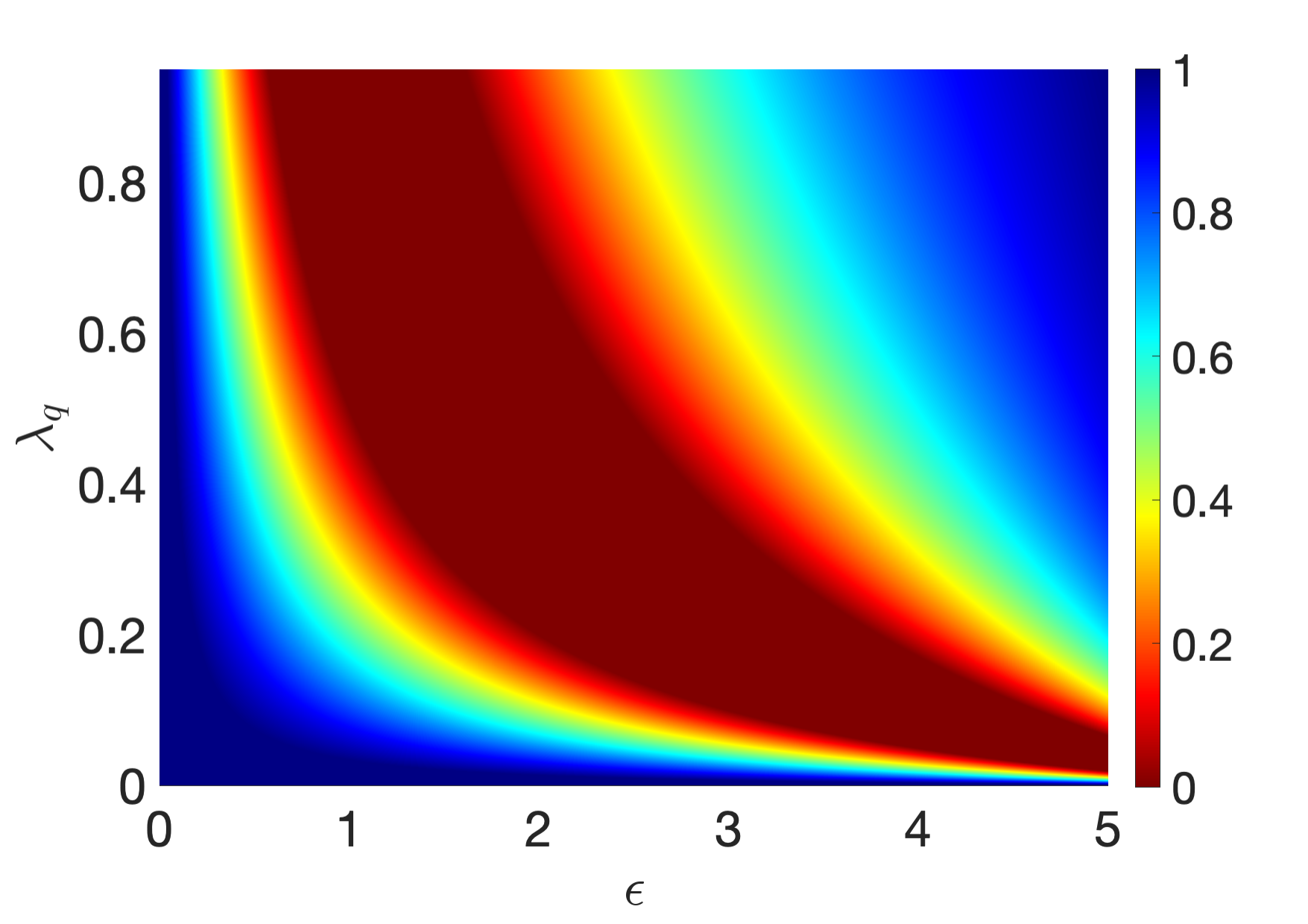} \\
\end{tabular}
\caption{ {\small Case without information cost. As a function of the weight $\lambda_q$ of the best-in-class target (a) Optimal choice of standard deviation $\sigma^\ast_\infty$ of $X^\ast_\infty$ (b) Optimal $m^\ast_\infty$ of $X^\ast_\infty$ (c) Optimal value of the cost function of the Regulator (d) Scaled nformation map ie ratio $(\overline{\sigma}-\sigma^\ast_\infty)/\Delta\sigma$ as a function of both $\lambda_q$ and $\epsilon$.  Parameters: $\ell=a=10$, $\gamma = 1$, $c=0.5$, $\kappa=0.12$, $\epsilon=0.5$.} }
\label{fig:CFP}
\end{center}
\end{figure}

\medskip

At this stage, some remarks can be done. First, note that ,  for a given level of information as measured by the standard deviation $\sigma^\ast_\infty$ of the carbon footprint process, the expected carbon footprint of the representative firm is a decreasing function of the best-in-class intensity parameter $\epsilon$. Thus, the more the firms are engaged in an emulation process in reducing their carbon footprint, the more the average carbon footprint does reduce. This is the phenomenom that information provision is supposed to reinforce or at least to help in its implementation. But, we already observed that $\sigma^\ast_\infty(b)$ is a decreasing function of the accounting norm stringency as measured by the regulator's control $b$. Thus, when the regulator provides more information to firms engaged in a best-in-class emulation process in carbon footprint reduction, the first effet of information provision is {\em to increase the average carbon footprint} of the representative firm. Indeed, information helps firms to coordinate in a process to reduce the effort of reaching a below average carbon footprint. 

Second, note that the objective of the regulator is to miminise $\EE[(X_\infty^\ast)^2] = (m_0 - \epsilon m_1 \sigma^\ast_\infty)^2 + (\sigma^\ast_\infty)^2$ where $\sigma^\ast_\infty \in (\underline{\sigma}, \overline\sigma]$. Thus, when $\epsilon>0$, the regulator has to deal with a tradeoff where information reduces the variance $(\sigma_\infty^\ast)^2$ of the firms carbon footprint but increases its average $m_\infty^\ast$. When, there is no best-in-class target (ie $\epsilon =0$ or $\lambda_q=0$), only the existence of an information cost function $G(z)$ prevents the regulator from sending full information and taking $\sigma_\infty^\ast = \underline{\sigma}$. This point is obtained directly in relation~\eqref{eq:sigmaCF} by taking $\epsilon=0$.  But as soon as firms are engaged in such an emulation process, even in the absence of information cost, the regulator may choose to deviate from sending full information on the state process. The Figure~\ref{fig:CFP} illustrates this phenomenon.

\medskip

Figure~\ref{fig:CFP}~(a), (b), (c) and (d) display $\sigma_\infty^\ast$, $m_\infty^\ast$ and the cost function of the regulator as a function of the best-in-class parameter $\epsilon$ in a situation where the is no information cost ($\eta=0$). We observe in picture~(a) that for low values of $\epsilon$, it is optimal to send full information: $\sigma^\ast_\infty$ is set at its lowest possible value. But as soon as $\epsilon$ excesses a thresold, the regulator restrains information. As $\epsilon$ increases, the regulator decreases information until it reaches the largest possible value of $\sigma^\ast_\infty$. At this stage, we observe in pictures~(b) and (c), that the average $m^\ast_\infty$ has reached its minimum possible value and the regulator's cost is equal to the case of no information provision, which is also the largest possible cost she can get.

But, more spectacular, we observe that when $\epsilon$ continues to increase, it reaches a second threshold where it is now again optimal to send information. Indeed, picture~(b) shows that if the regulator would keep sending no information, the average value $m^\ast_\infty$ would become highly negative. But, because the regulator's criterion penalises also large negative values (as if she would not believe in carbon compensation), providing information helps the firms to maintain their average carbon fooprint close to zero.

The picture~(d) of Figure~\ref{fig:CFP} provides the information map of the regulator, ie the ratio $(\overline\sigma-\sigma^\ast_\infty)/\Delta\sigma \in [0,1]$ as a function of both $\epsilon$ and $\lambda_q$.  The blue zones correspond to a ratio of $1$ while the dark red zones correspond to a ratio of $0$. It shows that the set of points where it is optimal for the regulator to send full information on the firms carbon footprint are not connex. They are separated by an information dessert where information is detrimental to a reduction of carbon footprint.

\medskip

As a result, our model shows that only information cost can justify deviation from sending full information when there is no strategic interaction between firm willing to reduce their carbon footprint. But, in the presence of strategic interaction in the form of a best-in-class emulation process, information can also be used by strategic firms to coordinate on a collectively higher level of carbon footprint to look individually cleaner. And, thus, even in the absence of information cost, it may be optimal to blur information available to firms.

\section{Conclusion}\label{sec:conclusion}

In the context of continuous-time dynamics, the economic literature on Bayesian persuasion has focused on time-dependent strategies employed by the Sender for the purpose of information provision. Moreover, it has been assumed that the Receiver in question is myopic, thereby enabling the definition of an HJB equation for the Sender. In our paper, we adopt an alternative approach. We have developed a dynamic framework for the provision of information in which the Sender defines the efficiency characteristics of the information device on a one-time basis. This approach ensures that the Sender is bound by her decision. The applications presented in the context of electricity consumption flexibility and carbon footprint are intended to demonstrate that, despite the static nature of the Sender's decision-making process, our framework allows for a diverse range of economic interpretations.  

The framing of continuous-time Bayesian persuasion in a filtering framework with an ergodic criterion allows for the development of tractable applications with a richer set of possible state dynamics. This paper focuses on Kalman-Bucy filtering, a topic that has not been extensively exploited in the existing literature. The general infinite-dimensional filtering equation makes it unlikely to have any application yielding tractable computations and closed-form expressions as presented in our paper. Nevertheless, future developments of the present setting may consider the possibility of device characteristics changing over time in accordance with a finite-state Markov chain. In such a scenario, both the Receiver and the Sender could be conceptualised as agents with rational anticipation, resulting in a stochastic dynamic leader-follower problem. However, the issue of commitment to an information provision strategy is likely to resurface, necessitating the investigation of methods to reinforce the Sender's commitment. 

\paragraph{Acknowledgements:} 
René Aïd thanks the financial support of the {\em Finance and Sustainable Development EDF-CA CIB Chair}, the {\em Finance for Energy Market} Research Initiative, the French ANR PEPR Math-Vives project MIRTE ANR-23-EXMA-0011, the Department of Mathematics of the University of Padova and the grant 6301-2 from the Indo-French Centre for the Promotion of Advanced Research. He is particularly grateful to Ankur Kulkarni (IIT Bombay) for introducing him to the use of information signals in the shaping of agents' behaviour and to Françoise Forges for her explanations on the differences with cheap talk.\smallskip

Ofelia Bonesini acknowledges financial support from Engineering and Physical Sciences Research
Council [Grant EP/T032146/1].\smallskip

Giorgia Callegaro and Luciano Campi acknowledge financial support under the National Recovery and Resilience Plan (NRRP), Mission 4, Component 2, Investment 1.1, Call for tender No. 1409 published on 14.9.2022 by the Italian Ministry of University and Research (MUR), funded by the European Union – NextGenerationEU – Project Title: Probabilistic Methods for Energy Transition -- CUP G53D23006840001 - Grant Assignment Decree No. 1379 adopted on 01/09/2023 by MUR. \smallskip

The authors thanks for their valuable comments in improving this manuscript the attendees of the Bachelier Congress 2024 edition, the Hammamet Conference on Stochastic Control and Games, the Banff BIRS workshop on ``Modeling, Learning and Understanding'', the Online Seminar of International Bachelier Society, the Oxford seminar of applied mathematics, the Bachelier seminar at IHP, Paris, and in particular Agostino Capponi, Robert Crowell, Mike Ludkovski, Marcel Nutz, Fenghui Yu and Yufei Zhang.

\appendix

\section{Verification of the standing assumptions in the applications}

\subsection{The informative value of smart meters}\label{app_ex_1}

We start with the first example, introduced in Section \ref{sec:ex_elec}. The following mapping between model parameters holds: $d_W = d_B = r=1$ and
$$
A_{\rm x} = - \kappa; \quad  B_{\rm x}  = 1 ; \quad  c_{\rm x} =  \kappa \ell; \quad b= b/\sigma.
$$
The function in the Receiver's objective functional is $f(x,v) =  (p_1 + u_0) x^2 + ( p_0 - 2 \ell u_0) x+ u_0 \ell^2 + \frac{1}{2\gamma} v^2$, with $p_0, p_1, u_0, \ell,\gamma >0$. Hence, in this example, we have  (see Equation \eqref{eq:f}): $F_0 = u_0 \ell^2, F_1 = p_0 - 2 \ell u_0, F_2 = p_1 + u_0, C_0=0, C_1= \frac{1}{2 \gamma}$. 
The function in the Sender functional is $g(x,v) = g_0 x + \frac12 g_1 x^2 - (p_0 + p_1 x) x$, with $g_0, g_1 >0$.

Assumption \ref{ass_coeff} $i)$ (ensuring existence of $P_b(\infty)$ and so filter stability at $+\infty$) is satisfied, since, exploiting Definition \ref{def:stab_obs} with $n=m=1$, we have that $(- \kappa, b)$ is \emph{stabilisable}. Indeed, being $b$ finite, there always exists a real number $c$ such that $-\kappa + b c <0$.

Assumption \ref{ass_coeff} $ii) - v)$ (ensuring existence of the optimal $G_2$) is also satisfied, as we have: $ii) \ F_2 = p_1 + u_0 \ge 0$; $iii) \ B_{\rm x} C_2^{-1} B_{\rm x}^\top = 2 \gamma \ge 0$;
$iv)\  (-A_{\rm x}, B_{\rm x} C_2^{-1} B_{\rm x}^\top) = (\kappa, 2 \gamma)$ stabilisable; $v) \  (-A_{\rm x}^\top,F_2^\top) = (\kappa, p_1 + u_0) $ stabilisable.
So, existence of $G_2$ (and $G_1$, see below) is granted and we have
$$
G_2 = \frac{\beta - \kappa}{2 \gamma} >0, \quad G_1 = \frac{1}{\beta} \left( p_0 - 2 \ell u_0 + \frac{\kappa \ell}{\gamma} (\beta - \kappa) \right),
$$
with $\beta = \sqrt{\kappa^2 + 2 \gamma (p_1 + u_0)} > \kappa >0$. Notice that $G_2$ solves a quadratic equation, admitting two solutions and we choose the one for which  Assumption \ref{ass_Theta1} holds,  since $\Theta_1 := - \kappa - 2 \gamma G_2 = - \beta $ is striclty negative.
Moreover, $G_1$ exists, given that $\beta \neq 0$.
Finally, Assumption \ref{ass_stab_cov} is also satisfied, since the matrix $\Theta$:
\begin{align*}  \Theta(t) & =
\begin{pmatrix}
        -\kappa  & \kappa - \beta\\
        P_b(t) b^2 & -\beta -  P_b(t)b^2
    \end{pmatrix}
    \end{align*} 
is stable. This comes from the fact that a $2\times2$ real matrix, $A$, is stable if and only if $\textrm{Tr}(A) <0$ and $\textrm{det}(A)>0$, which is the case here. Moreover, $(\Xi, \Theta^\top )$ is observable (recall Definition \ref{def:stab_obs}), with $\Xi(t) : =
    \begin{pmatrix}
        1 &  0\\
         0 & P_b(t)b 
    \end{pmatrix}$, since $\Xi$ has rank equal to two for $b \neq 0$.

\subsection{Carbon footprint accounting rules}\label{app_ex_2}

We focus here on the second example, introduced in Section \ref{sec:carbon}. The following mapping between model parameters holds: $d_W = d_B = r=1$ and
$$
A_{\rm x} = - \kappa; \quad  B_{\rm x}  = 1 ; \quad  c_{\rm x} =  \kappa \ell; \quad b= b/\sigma.
$$
The function in the Receiver's objective functional is $f(x,v) = (\lambda_q + \lambda_a)x^2  -2 ( a \lambda_a + q \lambda_q) x +\lambda_q q^2 + \lambda_a a^2+ \frac{1}{2\gamma} {v^2}$, with $\lambda_a, \lambda_q,\gamma >0$. Hence, we have $F_0 = \lambda_q q^2 + \lambda_a a^2, F_1 = -2 a \lambda_a - 2 q \lambda_q , F_2 = \lambda_q + \lambda_a, C_1=0, C_2= \frac{1}{2 \gamma}$. 
The function in the Sender functional is $g(x,v) = \frac12 c x^2$, with $c >0$.

As in the previous example, Assumption \ref{ass_coeff} $i)$ is satisfied, since $(- \kappa, b)$ is \emph{stabilisable}. 
Assumption \ref{ass_coeff} $ii)-v)$ holds, since: $ii) \ F_2 = \lambda_q + \lambda_a  \ge 0$; $iii) \  B_{\rm x} C_2^{-1} B_{\rm x}^\top = 2 \gamma \ge 0$;
$iv)\  (-A_{\rm x}, B_{\rm x} C_2^{-1} B_{\rm x}^\top) = (\kappa, 2 \gamma)$ stabilisable; $v) \  (-A_{\rm x}^\top,F_2^\top) = (\kappa, \lambda_q + \lambda_a) $ stabilisable.
So, existence of $G_2$ (and, as a consequence, of $G_1$, see below) is granted and we have:
$$
G_2 = \frac{\beta - \kappa}{2 \gamma} >0, \quad G_1 = \frac{1}{\beta \gamma} \left( \kappa \ell (\beta - \kappa) - 2 \gamma (a \lambda_a +  q \lambda_q) \right),
$$
where $\beta = \sqrt{\kappa^2 + 2 \gamma ( \lambda_q + \lambda_a)} > \kappa$. As in the previous example, $G_2$ solves a quadratic equation, admitting two solutions. We choose the one for which  Assumption \ref{ass_Theta1} holds,  since $\Theta_1 := - \kappa - 2 \gamma G_2 = - \beta $ is striclty negative.
Being $\beta \neq 0$, $G_1$ is well defined.
Finally, Assumption \ref{ass_stab_cov} is also satisfied, since the matrix
\begin{align*}  \Theta(t) & =
\begin{pmatrix}
        -\kappa  & \kappa - \beta\\
        P_b(t) b^2 & -\beta -  P_b(t)b^2
    \end{pmatrix}
    \end{align*} 
is stable. To see this we use once more the fact that $\textrm{Tr}(\Theta) <0$ and $\textrm{det}(\Theta)>0$. 
Moreover, $(\Xi, \Theta^\top )$ is observable (recall Def. \ref{def:stab_obs}), with $\Xi(t) : =
    \begin{pmatrix}
        1 &  0\\
         0 & P_b(t)b 
    \end{pmatrix}$, since $\Xi$ has full rank for $b \neq 0$.

\hs

Moreover, we provide hereafter a detailed version of the computations that leads to the solution to illustrate the easyness of the method.

\hs

We have $dX_t = \kappa (\ell - X_t) dt + v_t dt  + dW_t$ and $dM_t = b X_t dt + dB_t$ and the Receiver's rate of profit is $f(x,v) := c(v) + \lambda (x-q)^2 + \lambda_a (x-a)^2$ with $c(v) := \frac12 v^2/\gamma$. The estimated process dynamics is $dY_t = \kappa (\ell - Y_t) dt + v_t dt+ b P_b(t) dI_t$ where $I$ is the innovation process.  The variance of the estimator $P_b(t)$ satisfies
$\dot P_b = - 2 \kappa P_b - b^2 P_b^2 + 1$, with $P_b(0) = 0$, and we are only interested in its stationary value given by $- 2 \kappa P_\infty - b^2 P_\infty^2 + 1 = 0$. The value function $V$ of the Receiver satisfies the HJB equation
\begin{align*}
\zeta = \frac12 P_\infty^2 V_{yy} + \inf_{v} \big\{ k(\ell - y) V_y  + v V_y + f(y,v) \}, \quad
P_\infty = \frac1{b^2} \sqrt{\kappa^2 + b^2} - \frac{\kappa}{b^2}.
\end{align*}
From which we get $v^\ast = - \gamma V_y$ and thus
\begin{align*}
\zeta = \frac12 P_\infty^2 V_{yy} + \kappa (\ell - y) V_y  - \frac12 \gamma V_y^2 + \lambda_a (y -a)^2 + \lambda_q (y-q)^2.
\end{align*}
Assuming $V(y) = A y^2 + B y + C$ we get
\begin{align*}
-2\kappa A  - 2 \gamma A^2 + \bar\lambda = 0, \quad
2 \kappa \ell A - \kappa B -  2\gamma AB -  2\bar q  = 0, \quad \bar\lambda := \lambda_a + \lambda_q, \quad
\bar q := a \lambda_a + q \lambda_q
\end{align*}
Hence, $V$ being convex we pick $A = \frac{\beta - \kappa}{2\gamma}$ $>0$, with $\beta := \sqrt{\kappa^2 + 2\gamma\bar\lambda}$. And we get 
\begin{align*}
B =  \frac{1}{\gamma \beta} \Big( \kappa\ell(\beta-\kappa) - 2\gamma \bar q\Big).
\end{align*}
Hence, the optimal control is
\begin{align*}
v^\ast (y)= - 2 \gamma A y - \gamma B = (\kappa - \beta) y - \frac{1}{\beta}\Big( \kappa\ell(\beta-\kappa)-2 \gamma\bar q\Big)
\end{align*}
Thus the dynamics of the estimated process is
\begin{align*}
d\hat X_t  & = \kappa (\ell - \hat X_t) dt + (\kappa - \beta) \hat X_t dt - \frac{1}{\beta}\Big( \kappa\ell(\beta-\kappa)-2 \gamma\bar q\Big) dt + b P_b(t) dI_t, \\
& = -\beta \hat X_t dt + \frac{\ell \kappa^2 + 2\gamma \bar q}{\beta} dt + b P_b(t) dI_t 
= -\beta \Big(\hat X_t -  \frac{\ell \kappa^2 + 2\gamma \bar q}{\beta^2}\Big) dt 
+ b P_b(t) dI_t
\end{align*}
Hence, the dynamics of the estimated process is
\begin{align*}
d\hat X_t  & = \beta\big(\hat\ell(q) - \hat X_t\big) dt + b P_b(t) dI_t, \quad \hat\ell(q) := \frac{\ell \kappa^2 + 2\gamma \bar q}{\kappa^2 + 2\gamma\bar\lambda}
\end{align*}
Using the fact that $dI_t = dM_t - b\hat X_t dt$, we have also
\begin{align*}
d\hat X_t  & = \big[ \beta\big(\hat\ell(q) - \hat X_t\big) + b^2 P_b(t)(X_t - \hat X_t) \big]dt + b P_b(t) dB_t, 
\end{align*}

Besides, the dynamics of the state process $X$ is given by
\begin{align*}
dX_t  & = \big[ \kappa(\ell - X_t) + (\kappa - \beta) \hat X_t - \kappa\ell + \beta \hat\ell(q) \big] dt + dW_t \\
& =  \big[- \kappa X_t + (\kappa - \beta) \hat X_t  + \beta \hat\ell(q) \big] dt + dW_t \\
& = \big[ \kappa(\hat X_t - X_t) + \beta(\hat\ell(q) - \hat X_t)\big] dt + dW_t
\end{align*}

We define $m_t:= \mathbb E (X_t^*)$ and $U_t := \mathbb E [{(X_t^*)}^2] $, $Y_t := \mathbb E [{(\widehat X_t^*)}^2] $ and $Z_t := \mathbb E [ \widehat X_t^* X_t^*] $ and we will need the following:
\begin{align*}
d m_t & = \beta( \hat\ell(q) - m_t) dt, \quad
d U_t  =  \big[2 (\kappa - \beta) Z_t - 2 \kappa U_t + 2 \beta m_t  \hat\ell(q) +1 \big] dt, \\
d Y_t & = \big[-  2 Y_t (\beta + b^2 P_b(t)) + 2 b^2 P_b(t) Z_t + 2 \beta  \hat \ell(q) m_t + b^2 P_b^2(t)\big] dt, \\
dZ_t &  = \big[ \kappa(Y_t - Z_t) + \beta(\hat\ell(q) m_t - Y_t)  + \beta(\hat\ell(q)m_t - Z_t) 
+ b^2P_b(t)(U_t  - Z_t) \big] dt. \\
dZ_t  & = \big[ (\kappa-\beta) Y_t - (\kappa+\beta + b^2 P_b(t)) Z_t 
+ b^2 P_b(t) U_t + 2 \beta\hat\ell(q) m_t \big] dt.
\end{align*}
Clearly $m_\infty  =  \hat\ell(q)$  and the stationary solutions of the above ODEs $U_\infty, Y_\infty,Z_\infty$ solve the system:
\begin{align*}
& \big[2 (\kappa - \beta) Z_\infty - 2 \kappa U_\infty + 2 \beta  (\hat\ell(q))^2 +1 \big] =0 , \\
& \big[-  2 Y_\infty (\beta + b^2 P_b(\infty)) + 2 b^2 P_b(\infty) Z_\infty + 2 \beta  (\hat \ell(q))^2 + b^2 P_b^2(\infty)\big] =0 , \\
& \big[ \kappa(Y_\infty - Z_\infty) + \beta( (\hat\ell(q))^2 - Y_\infty)  + \beta((\hat\ell(q))^2 - Z_\infty) +b^2P_b(\infty)(U_\infty  - Z_\infty) \big]  = 0\\ 
\end{align*}
The stationary variance is:
$$
\sigma^2_\infty(b) = U_\infty- (m_\infty)^2,
$$
hence, $X^\ast_\infty$ is Gaussian with mean $\hat\ell(q)$ and variance given by $\sigma^2_\infty(b)$. 
\hs

Now, either we solve the above system or, we exploit the general formulas in Eq. \eqref{eq:ODE_m} and \eqref{eq:ODE_w}, namely
\begin{align}
    \D m_t & = 
    \left(
   - \beta   m_t + \frac{\kappa^2 \ell + 2 \gamma \bar q}{\beta}
    \right) \D t , \quad m_0 = x_0^\top \\ 
    \D w_t & = ( w_t \Theta(t)^\top + \Theta(t) w_t +\Xi(t)\Xi(t)^\top)  \D t ,  \quad w_0 = \mathbf 0,
\end{align}
with
\begin{align*}  \Theta(t) & :=
\begin{pmatrix}
        -\kappa & \kappa - \beta \\
        P_b(t) b^2 & -\beta - P_b(t)b^2
    \end{pmatrix}
    \end{align*}
    and
    \[ \Xi(t) : =
    \begin{pmatrix}
        1 & 0\\
        0 & P_b(t)b 
    \end{pmatrix}.
    \]
So, we find $m_\infty = \frac{\ell \kappa^2 + 2\gamma \bar q}{\beta^2} = \hat \ell(q)$ as above and, introducing the moments
$$
w_1 := \mathbb{V}\text{\rm ar}(X_\infty^*) , \qquad w_2 = \mathbb{C}\text{\rm ov}(X_\infty^*, {\widehat X_\infty}^*), \qquad w_3 = \mathbb{V}\text{\rm ar}({\widehat X_\infty}^*)
$$
we have that they solve the system
\begin{align*}
& - 2 \kappa w_1+ 2 (\kappa - \beta)w_2 +1 =0,\\
& P_b(\infty) b^2w_1- w_2 (\beta + \kappa + P_b(\infty) b^2) + (\kappa - \beta) w_3 =0 , \\
& 2 P_b(\infty) b^2 w_2 + P_b^2(\infty) b^2 - 2 w_3 (\beta + P_b(\infty) b^2) = 0.\\ 
\end{align*}
Recalling that $P_{b}(\infty)  = \frac{1}{b^2}\big(\sqrt{ \kappa^2 + b^2 } -\kappa\big)$, we find $w_2$
$$
w_2 = \frac{1}{2\beta} \left(  \sqrt{1 + \frac{\kappa^2}{b^2}} - \frac{\kappa}{b} \right)^2, 
$$
and then our final target $\sigma_b^2(\infty)$:
$$
\quad w_1 = \sigma_b^2(\infty)=\frac{1}{2\kappa} \left[ 1 - \frac{\beta - \kappa}{\beta}   \left(  \sqrt{1 + \frac{\kappa^2}{b^2}} - \frac{\kappa}{b} \right)^2 \right]
$$

The fixed point condition for the equilibrium $q$ is given by $q = \EE\big[ X_\infty\big] - \epsilon \sigma_\infty(b)$ with $\EE\big[ X_\infty\big] = \hat\ell(q)$. Hence we have
\begin{align*}
&q = \frac{\ell \kappa^2 + 2\gamma \bar q}{\kappa^2 + 2\gamma\bar\lambda} - \epsilon \sigma_\infty(b), \quad \text{thus} \quad
 q^\ast(b) := \frac{\ell\kappa^2 + 2\gamma a \lambda_a}{\kappa^2 + 2\gamma  \lambda_a}
- \epsilon \frac{\kappa^2 + 2\gamma\bar\lambda}{\kappa^2 + 2\gamma \lambda_a} \sigma_\infty(b).
\end{align*}
Besides, the stationary equilibrium mean $m_\infty^\ast(b)$ is given by $m_\infty^\ast(b) = \hat\ell(q^\ast(b))$. Using the fact that  $\hat\ell(q^\ast(b)) = q^\ast(b) + \epsilon \sigma_\infty(b)$, we get
\begin{align*}
m_\infty^\ast(b) &= \frac{\ell\kappa^2 + 2\gamma a \lambda_a}{\kappa^2 + 2\gamma  \lambda_a} + 
\epsilon\sigma_\infty(b)
 \Big[ 1 -  \frac{\kappa^2 + 2\gamma\bar\lambda}{\kappa^2 + 2\gamma \lambda_a}\Big], \\
m_\infty^\ast(b) & = \frac{\ell\kappa^2 + 2\gamma a \lambda_a}{\kappa^2 + 2\gamma  \lambda_a} 
- \epsilon\sigma_\infty(b)
  \frac{2\gamma \lambda_q}{\kappa^2 + 2\gamma \lambda_a} =: m_0 - \epsilon m_1 \sigma_\infty(b).
  \end{align*}
Now, using  $\mathbb E[(X^\ast_\infty)^2]= (m_\infty^\ast)^2 + \sigma_\infty^2$, the optimisation problem of the Sender can be reformulated as
\begin{align*}
\inf_{z \in (\underline\sigma, \bar\sigma]} G(z) +  \frac12 c \big[ (m_0 - \epsilon m_1 z)^2 + z^2\big], \quad
G(z) = -  \frac{\eta}{\Delta \sigma} \ln\frac{z - \underline\sigma}{\Delta\sigma}, \quad z \in (\underline\sigma, \bar\sigma], \quad \Delta\sigma = \overline\sigma - \underline\sigma,
\end{align*}
Interior solution condition reads
\begin{align*}
&0 \leq G'(\overline\sigma) + c\big[ -\epsilon m_1 (m_0 - \epsilon m_1 \overline\sigma) + \overline\sigma\big], \\
&\delta \leq   \Delta\sigma \big[(1 + \epsilon^2 m_1^2) \overline\sigma -\epsilon m_1 m_0\big]
\end{align*}
with  $\delta := \eta/(c \Delta\sigma)$.
Assuming the condition above, first-order optimality condition is
\begin{align*}
&-\frac{\eta}{\Delta\sigma(z-\underline\sigma)} + c \big[ (1+\epsilon^2 m_1^2) z -\epsilon m_1 m_0\big] = 0 \\
&- \eta + c \Delta\sigma (z-\underline\sigma) \big[ (1+\epsilon^2 m_1^2) z -\epsilon m_1 m_0 \big] = 0 \\
&- \eta + c \Delta\sigma  
 \big[ (1+\epsilon^2 m_1^2) z^2 
 - \big( \epsilon m_0 m_1 + (1+\epsilon^2 m_1^2)\underline\sigma  \big) z 
 + \epsilon m_1 m_0 \underline\sigma \big] = 0 
\end{align*}
 We have
\begin{align*}
&   (1+\epsilon^2 m_1^2) z^2 
 - \big( \epsilon m_0 m_1 + (1+\epsilon^2 m_1^2)\underline\sigma  \big) z 
 + \epsilon m_1 m_0 \underline\sigma  - \delta = 0 \\
&    z^2 
 - \Big( \frac{\epsilon m_0 m_1}{1+\epsilon^2 m_1^2} + \underline\sigma  \Big) z 
 - \frac{\delta - \epsilon m_1 m_0 \underline\sigma}{1+\epsilon^2 m_1^2} = 0 \\
 z^\ast &= \frac12 \Bigg[ \sqrt{ \Big( \frac{\epsilon m_0 m_1}{1+\epsilon^2 m_1^2} + \underline\sigma  \Big)^2 
 + 4 \frac{\delta - \epsilon m_1 m_0 \underline\sigma}{1+\epsilon^2 m_1^2} }
 + \Big( \frac{\epsilon m_0 m_1}{1+\epsilon^2 m_1^2} + \underline\sigma  \Big)\Bigg].
 \end{align*}\hfill$\Box$
 
 \hs

\appendix

\end{document}